\numberwithin{equation}{section}
\newtheorem{thm}{Theorem}[section]
\newtheorem{prop}[thm]{Proposition}
\newtheorem{lem}[thm]{Lemma}
\newtheorem{rem}{Remark}[section]
\newtheorem{defn}{Definition}
\newcommand{\laplacian}{\Delta}
\newcommand{\BBB}{\mathbb}
\newcommand{\R}{{\BBB R}}
\newcommand{\LR}[1]{{\langle {#1} \rangle }}
\newcommand{\lec}{{\ \lesssim \ }}
\newcommand{\gec}{{\ \gtrsim \ }}
\newcommand{\cross}{\times}
\newcommand{\e}{\varepsilon}
\newcommand{\p}{\partial}
\newcommand{\supp}{\operatorname{supp}}
\newcommand{\I}{\infty}
\newcommand{\EQ}[1]{\begin{equation} \begin{split} #1
 \end{split} \end{equation}}
\newcommand{\EQS}[1]{\begin{align} #1 \end{align}}
\newcommand{\EQQS}[1]{\begin{align*} #1 \end{align*}}
\newcommand{\ol}{\overline}
\newcommand{\F}{\mathcal{F}}
\newcommand{\1}{{\mathbf 1}}
\title[LWP for the degenerate Zakharov system]{Local well-posedness of the Cauchy problem for the degenerate Zakharov system}
\author[I. Kato]{Isao Kato 
}  
\address[Isao Kato]{Department of Mathematics, Kyoto University,
Kitashirakawa Oiwake-cho, Sakyo-ku, Kyoto, 606-8502, Japan} 
\email[Isao Kato]{kato.isao.23n@st.kyoto-u.ac.jp}
\subjclass[2010]{35Q55, 35A01, 35A02}
\keywords{Cauchy problem, well-posedness, low regularity, $U^2, V^2$ spaces} 
\begin{document}

\begin{abstract}
The aim of this paper is to investigate well-posedness of the Cauchy problem for the degenerate Zakharov system.     
Local well-posedness holds for anisotropic Sobolev data by applying $U^2, V^2$ type spaces.  
We give the Schr\"odinger initial data $H^{s_k,\, s'}$ and the wave data $H^{s_l,\, s'}$ 
where $s_k > (d-1)/2,\, s_l > (d-2)/2,\, s_k - s_l = 1/2$ and $s' > 1/2$.  
\end{abstract} 
\maketitle 
\setcounter{page}{001}


\section{Introduction}
We consider the Cauchy problem for the degenerate Zakharov system: 
\EQS{
 \begin{cases}
  i(\partial_t u + \partial_{x_d} u) + \Delta_{\perp} u = nu, \qquad \qquad (t,x) \in [0,T] \cross \R^d, \\
  \partial_t^2 n - \Delta_{\perp}n = \Delta_{\perp} |u|^2, \hspace{6em} \ (t,x) \in [0,T] \times \mathbb{R}^d, \\
  (u, n, \partial_t n)|_{t=0} = (u_0, n_0, n_1),                                                                                                                          
                  \label{degenerateZ}
 \end{cases}  
} 
where $d \ge 2,\, \Delta_{\perp} = \displaystyle \sum_{i=1}^{d-1}\p_{x_i}^2, u$ is complex valued function and $n$ is real valued function. 
For three spatial dimension, \eqref{degenerateZ} describes the laser propagation when the paraxial approximation is used and the effect of the group velocity is negligible \cite{SulSul}, \cite{LPS}, \cite{BL}. 
For the Cauchy problem for the Zakharov system   
\EQS{
 \begin{cases}
  i\partial_t u + \Delta u = nu,\qquad \qquad \ \ \ (t, x) \in [0, T] \times \mathbb{R}^d, \\
  \partial_t^2 n - \Delta n = \Delta |u|^2,\hspace{4em} (t, x) \in [0, T] \times \mathbb{R}^d, \\ 
  (u, n, \partial_t n)|_{t=0} = (u_0, n_0, n_1),   
 \end{cases}
                 \label{Zakharov}
}
where $\Delta = \displaystyle \sum_{i=1}^d \partial_{x_i}^2$, 
the well-posedness for \eqref{Zakharov} is well studied with low regularity initial data $(u_0, n_0, |\nabla _x|^{-1} n_1) \in H^k(\mathbb{R}^d) \times H^l(\mathbb{R}^d) \times H^l(\mathbb{R}^d)$, for instance 
Ginibre, Tsutsumi and Velo \cite{GTV} for all spatial dimensions, 
Bejenaru, Herr, Holmer and Tataru \cite{BHHT} for $d=2$,    
Bejenaru and Herr \cite{BH} for $d=3$ and 
Bejenaru, Guo, Herr and Nakanishi \cite{BGHN}, 
Candy, Herr and Nakanishi \cite{CHN} or the author and Tsugawa \cite{KaT} for $d \ge 4$ case.    
On the other hand, the degenerate Zakharov system \eqref{degenerateZ} with low regularity initial data 
is much less considered than that for the Zakharov system. 
M. Colin and T. Colin \cite{CC} derived the Cauchy problem for \eqref{degenerateZ} in three spatial dimension.  
Later Linares, Ponce and Saut \cite{LPS} proved local well-posedness in a suitable function space by deriving the local smoothing estimate and the maximal function estimate. 
More precisely, the lowest regularity initial data is $(u_0, n_0, n_1) \in \tilde{H}^5(\mathbb{R}^3) \times H^5(\mathbb{R}^3) \times H^4(\mathbb{R}^3)$ and  $\partial_{x_3} n_1 \in H^4(\mathbb{R}^3)$, where 
\EQQS{
 \tilde{H}^s(\mathbb{R}^3) = \{ f \in H^s(\mathbb{R}^3)\, |\, \partial_{x_1}^{1/2}\partial^{\alpha}f,\, \partial_{x_2}^{1/2}\partial^{\alpha}f \in L^2(\mathbb{R}^3), \, |\alpha| \le s,\, \alpha \in (\mathbb{Z}_{\ge 0})^3 \}. 
}  

Barros and Linares \cite{BL} obtained local well-posedness for initial data $(u_0, n_0, n_1) \in \tilde{H}^2(\mathbb{R}^3) \times H^2(\mathbb{R}^3) \times H^1(\mathbb{R}^3)$ and $\partial_{x_3}n_1 \in H^1(\mathbb{R}^3)$. 
The key is improving the regularity of the maximal function estimate \eqref{maximal_function_est} and deriving the Strichartz estimate for the Schr\"odinger equation \eqref{Strichartz_Estimate} below. 
We may see lack of dispersion in $x_3$ variable in \eqref{Strichartz_Estimate}. 
\EQS{
 &\| S(t)f\|_{L^2_{x_1} L^{\infty}_{x_2,\, x_3,\, T}} \le c(T,s)\|f\|_{H^s(\mathbb{R}^3)},\quad s > 3/2,  \label{maximal_function_est} \\ 
 &\| S(t)g\|_{L^q_t L^r_{x_1,\, x_2} L^2_{x_3}} \le c\|g\|_{L^2_{x_1,\, x_2,\, x_3}},   \label{Strichartz_Estimate}
}
where $2/q = 1-2/r,\, 2 \le r < \infty,\, c(T,s)$ and $c$ are constants, $S(t)$ is the free Schr\"odinger operator, see section 2 for precise definition. 
   
The aim of this paper is to prove local well-posedness by $U^2, V^2$ type spaces in all spatial dimensions  $(d \ge 2)$. 
The nonlinear estimate namely Proposition \ref{BE} plays an important role in this paper. 
To prove Proposition \ref{BE}, we remark that (i) we use the anisotropic Sobolev data $H^{s,\, s'}(\mathbb{R}^d)$, (ii) we construct the solution for the Schr\"odinger equation in the intersection space between $V^2$ based space and the spaces associated to the local smoothing and the maximal function estimate, (iii) we consider $s_k - s_l = 1/2$, where $s_k, s_l$ denotes regularity of the initial data for the Schr\"odinger and the wave equation respectively. 
Concerning (i), the Schr\"odinger equation in \eqref{degenerateZ} does not have a dispersion in the $x_d$ variable, 
it seems hard to find smoothing property including the $x_d$ direction. 
Indeed, the local smoothing estimate Proposition \ref{local_smoothing} \eqref{local_smoothing_est} does not have 
a smoothing with respect to the $x_d$ variable.  
On the other hand, from the following modulation estimate \eqref{modulation_est} we can recover almost $1/2$ regularity with respect to $\Xi_1 = (\xi_1,\, \xi_1') \in \mathbb{R}^{d-1} \times  \mathbb{R}$.  
Let $\tau_3=\tau_1-\tau_2,\ \xi_3 = \xi_1 - \xi_2,\ |\xi_1| \gg |\xi_2|$ and $|\xi_1|^2 \gg \max \{ |\xi_1'|, |\xi_2'|\}$. Then it holds that 
\EQS{ 
 \max \big\{ \big|\tau_1 + |\xi_1|^2 +\xi_1' \big|, \big|\tau_2 + |\xi_2|^2 +\xi_2' \big|, \big|\tau_3 \pm |\xi_3| \big| \big\} 
  \gec |\Xi_1|.   
                                      \label{modulation_est}
} 
However applicable case is limited hence it is natural to use anisotropic Sobolev data $H^{s,\, s'}(\mathbb{R}^d)$ for \eqref{degenerateZ}.   
In Theorem \ref{mth}, we take $1/2+\varepsilon'$ regularity with respect to the $x_d$ variable for any $\varepsilon' > 0$. 
This regularity comes from the natural Sobolev embedding $H^{1/2+\varepsilon'}(\mathbb{R}) \hookrightarrow L^{\infty}(\mathbb{R})$. 
For (ii), we cannot replace $V^2$ by $L^{\infty}_t L^2_x$ since one of the nonlinear estimate for the Schr\"odinger equation, for $J_{4,\, 3,\, 1}$ in the proof of Proposition \ref{BE} we have to apply $U^2_S \hookrightarrow L^{\infty}_t L^2_x$ since the frequency $N_1$ of the wave equation is low.    
Moreover it seems difficult to control in the framework of $X^{s,\, b}$ defined by the Fourier restriction norm method.  
For the estimate of $J_{1,\, 1}$ in the proof of Proposition \ref{BE}, we need to gain $1/2$ regularity by the local smoothing estimate, however we run short of $\varepsilon$ regularity with time variable by applying the $X^{s,\, b}$ space. 
To solve the problem, we use the $U^2, V^2$ type spaces which are enhanced $X^{s,\, b}$ spaces.     
Now we mention (iii), namely $s_k - s_l = 1/2$ balances the two nonlinear estimates for the Sch\"odinger and the wave equations.  
Once we take $s_k, s_l$ satisfying $s_k - s_l > 1/2$, we may find that the nonlinear estimate for the Schr\"odinger equation, for instance $J_{1,\, 1}$ is hard to estimate since we need to gain more than $1/2$ regularity. 
By applying the local smoothing estimate, we only gain $1/2$ regularity hence it is not sufficient to derive the estimate for $J_{1,\, 1}$.  
We note that $J_{1,\, 1}$ contains resonant region and cannot apply the modulation estimate \eqref{modulation_est} there. 
Also, the bilinear Strichartz estimate Proposition \ref{x01/2} does not gain enough regularity to overcome the difficulty. 
The case $s_k - s_l < 1/2$ contains similar problem, hence $s_k-s_l=1/2$ is important in our analysis.   

If we transform $n_{\pm} := n \pm i\, \omega^{-1}\partial_t n, \omega := (-\laplacian_{\perp} )^{1/2}$, then \eqref{degenerateZ} is equivalent to the following:  
\EQS{
 \begin{cases}
  i(\partial_t  + \partial_{x_d} )u + \Delta_{\perp}u  = (1/2)(n_+ + n_-)u, \qquad \,   (t,x) \in [0,T] \times \mathbb{R}^d, \\
  (i\partial_t \mp \omega )n_{\pm} 
    = \pm \omega \, |u|^2, \hspace{9.5em} (t,x) \in [0,T] \times \mathbb{R}^d, \\
  (u, n_{\pm})|_{t=0} = (u_0, n_{\pm 0})                                                                                                                         
 \label{degenerateZ'} 
 \end{cases}  
}
The main result is as follows. 
\begin{thm}  \label{mth} 
Let $d \ge 2, s>(d-1)/2, s'>1/2$ and assume the initial data $(u_0,n_{\pm 0}) \in H^{s,\, s'}(\mathbb{R}^d) \times H^{s-1/2,\, s'}(\mathbb{R}^d)$. 
Then, \eqref{degenerateZ'} is locally well-posed in $H^{s,\, s'}(\mathbb{R}^d) \times H^{s-1/2,\, s'}(\mathbb{R}^d)$.   
\end{thm}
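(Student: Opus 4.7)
The plan is to recast \eqref{degenerateZ'} as a fixed point problem via Duhamel's formula and run a Picard iteration in $U^2, V^2$-based resolution spaces. For the Schr\"odinger component I would work in a space $\XS{s,s'}$ defined as the intersection of the $V^2$-based Bourgain space adapted to the symbol of $i(\p_t + \p_{x_d}) + \Delta_\perp$ with auxiliary spaces carrying the local smoothing and maximal function norms at the anisotropic regularity $(s,s')$. For the wave components $n_\pm$ I would use the analogous space $\XW{s-1/2,s'}$ built from $V^2$ adapted to the half-wave symbol $\pm\omega$. The linear parts of the Duhamel map lie trivially in these spaces by the energy estimate for $V^2$, so the whole argument reduces to bounding the Duhamel integrals through suitable bilinear and trilinear estimates.

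The heart is the nonlinear estimate Proposition \ref{BE}, which I would prove by a dyadic Littlewood--Paley decomposition of the three input frequencies into high/low cases, split further according to the relative size of the three modulations. In the non-resonant regime the modulation estimate \eqref{modulation_est} lets one trade a factor of $|\Xi_1|^{1/2}$ for the largest modulation, which combined with the $L^2_{t,x}$ embedding after absorbing this modulation closes the bound cleanly. In the resonant regime one must fall back on the dispersive bounds: the local smoothing estimate from Proposition \ref{local_smoothing} supplies $1/2$ derivative with respect to $\Xi_1$, the maximal function estimate handles the $L^\infty$ norms in directions without smoothing, the Strichartz inequality \eqref{Strichartz_Estimate} controls products in the perpendicular variables, and a bilinear Strichartz estimate (Proposition \ref{x01/2}) takes care of pairings of two high-frequency Schr\"odinger factors. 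The anisotropic $1/2+\eps'$ regularity in $x_d$ enters through the Sobolev embedding $H^{1/2+\eps'}(\R) \hookrightarrow L^\infty(\R)$, which recovers the $x_d$-direction in which no dispersion is available.

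The main obstacle is the resonant high-high-low interaction flagged as $J_{1,1}$ in the introduction: there the modulation estimate is unavailable, the local smoothing estimate must supply exactly $1/2$ derivative in $\Xi_1$, and no $\eps$-slack in the time variable remains. This is precisely why the $V^2$ framework is indispensable; an $X^{s,b}$-based proof would leak an $\eps$ in $t$ through the Sobolev embedding in time, whereas the $V^2$ transfer principle imports the dispersive norms without loss. A parallel subtlety appears in the piece $J_{4,3,1}$, where the wave frequency is low and one needs the embedding $U^2_S \hookrightarrow L^\infty_t L^2_x$, again natural in the $U^2, V^2$ setting but unavailable in $X^{s,b}$. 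The identity $s_k - s_l = 1/2$ is then forced by the requirement that the single $1/2$ derivative gained through local smoothing be shared between the Schr\"odinger and wave nonlinear estimates; any other ratio breaks one of the two bounds at $J_{1,1}$.

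Once Proposition \ref{BE} and its wave-side counterpart are in place, the contraction on a small ball of $\XS{s,s'} \times \XW{s-1/2,s'} \times \XW{s-1/2,s'}$ is obtained by taking the time interval short, and applying the same multilinear estimates to differences of two solutions yields uniqueness together with Lipschitz continuous dependence on the data. This produces local well-posedness in $H^{s,s'}(\R^d) \times H^{s-1/2,s'}(\R^d)$ for the triple $(u, n_+, n_-)$, which is the statement of the theorem.
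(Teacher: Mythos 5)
Your overall architecture coincides with the paper's: Duhamel reformulation, contraction mapping in spaces built from $V^2_S$ (for $u$) and $U^2_{W_{\pm}}$ (for $n_\pm$) intersected with the local-smoothing and maximal-function norms, with the whole theorem reduced to the nonlinear estimate Proposition \ref{BE}; you also correctly single out $J_{1,\,1}$ and $J_{4,\,3,\,1}$ as the critical interactions, the role of $U^2_S \hookrightarrow L^\infty_t L^2_x$, and the reason $s_k - s_l = 1/2$ is forced.

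However, the mechanism you propose for the core of Proposition \ref{BE} has a genuine gap. You plan a dichotomy between a non-resonant regime handled by the modulation estimate \eqref{modulation_est} and a resonant regime in which the bilinear Strichartz estimate of Proposition \ref{x01/2} controls the pairing of two high-frequency Schr\"odinger factors. The paper explicitly rules out both routes: \eqref{modulation_est} is valid only under the restrictive hypotheses $|\xi_1| \gg |\xi_2|$ and $|\xi_1|^2 \gg \max\{|\xi_1'|,|\xi_2'|\}$, so it cannot organize a global case analysis (this is exactly why the anisotropic data $H^{s,\,s'}$ are introduced), and the introduction states that Proposition \ref{x01/2} does not gain enough regularity to overcome the difficulty --- it is proved in the appendix only for future reference and is never invoked in the proof of Proposition \ref{BE}. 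The actual proof handles every frequency interaction by the physical-space trilinear bounds of Lemma \ref{trilinear} (H\"older, the embedding $H^{1/2+}(\mathbb{R}) \hookrightarrow L^{\infty}(\mathbb{R})$ in $x_d$, the local smoothing estimate transferred to $U^2_S$, the maximal function norm $M$, and the Strichartz embedding $V^2_S \hookrightarrow U^4_S \hookrightarrow L^4_t L^r_{\tilde{x}} L^2_{x_d}$ of Remark \ref{Strichartz_V^2}), together with the inhomogeneous local smoothing and maximal function estimates of Propositions \ref{inhomo_L^infty_tL^2_x}--\ref{inhomo_maximal_fcn} for the $L^\infty_t L^2_x$, $K$, and $M$ components. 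To complete your argument you would need either to carry out those trilinear estimates or to demonstrate that a modulation/bilinear-Strichartz decomposition actually closes at $s_k = (d-1)/2+$, which the paper's own remarks indicate it does not.
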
 

Now we mention the critical exponent for \eqref{degenerateZ'}. 
The scaling between the first and the second equation of \eqref{degenerateZ'} is different, hence there is no critical index for \eqref{degenerateZ'} in the strict sense. 
However, if we neglect $\mp \omega n_{\pm}$ in \eqref{degenerateZ'}, then under the scaling 
\EQQS{
 &u_{\lambda}(t, \tilde{x}, x_d) := \lambda^{-3/2}u( \lambda^{-2}t, \lambda^{-1}\tilde{x}, \lambda^{-2}x_d),\quad \ \, \tilde{x} = (x_1, ... , x_{d-1}) \in \mathbb{R}^{d-1},\, \lambda > 0, \\ 
 &n_{\pm \lambda}(t, \tilde{x}, x_d) := \lambda^{-2}n_{\pm}( \lambda^{-2}t, \lambda^{-1}\tilde{x}, \lambda^{-2}x_d),\quad \tilde{x} = (x_1, ... , x_{d-1}) \in \mathbb{R}^{d-1},\, \lambda > 0,  
}
the critical exponent is $k+2k'=(d-2)/2$ and $l+2l'=(d-3)/2$, 
where $(u_0, n_{\pm 0}) \in H^{k,\, k'}(\mathbb{R}^d) \times H^{l,\, l'}(\mathbb{R}^d)$. 
Hence, we expect that Theorem \ref{mth} is not optimal. 
However our result improves almost $1$ derivative than \cite{BL} for $d=3$. 

Finally, we mention the conservation laws for \eqref{degenerateZ}. 
$\|u(t)\|_{L^2_x}$ and 
\EQQS{ 
 \|(-\Delta_{\perp})^{1/2}u\|_{L^2_x}^2 + \text{Im}\, \int \bar{u}\, \partial_{x_d}u\, dx + \int n|u|^2\, dx + \frac{1}{2}\|n\|_{L^2_x}^2 + \frac{1}{2}\|(-\Delta_{\perp})^{-1/2}\partial_t n\|_{L^2_x}^2 
}
are conserved for \eqref{degenerateZ}.   
There is no global well-posedness result for \eqref{degenerateZ}, 
hence by using these quantities we expect obtaining global well-posedness 
in future. 

In section 2, we introduce notations, lemmas and solution spaces $X_S, Z^{s_l,\, s'}_{W_{\pm}}$.  
In section 3, we show the crucial nonlinear 
estimate to obtain Theorem \ref{mth}. 
Finally in section 4, we derive the bilinear Strichartz estimate for future reference.

\section{Notations and Preliminary Lemmas}
In this section, we prepare some lemmas, propositions and notations to prove the main theorem. 
$A \lec B$ means that there exists $C>0$ such that $A \le CB$.  
Also, $A \sim B$ means $A \lec B$ and $B \lec A$.  
Let $a \pm := a \pm \e$ for sufficiently small $\e > 0$ 
and 
$\tilde{x} = (x_1, ... , x_{d-1}) \in \mathbb{R}^{d-1}$. 
Let $u = u(t,x).\ \F_t u,\ \F_x u$ denote the Fourier transform of $u$ in time, space, respectively.  
We denote $\|f\|_2 = \|f\|_{L^2_{t,\, x}}$ and we define $H^{s,\, s'}(\mathbb{R}^d)$ with the norm 
\EQQS{ 
 \|g\|_{H^{s,\, s'}} := \Bigl( \int_{\mathbb{R}^d} \LR{\xi}^{2s} \LR{\xi'}^{2s'} |\mathcal{F}_x g(\xi, \xi')|^2 d\xi d\xi' \Bigr)^{1/2}
} 
for $\xi \in \mathbb{R}^{d-1},\, \xi' \in \mathbb{R},\,  
\LR{\cdot} := (1+|\cdot|^2)^{1/2}$. 
 
Let $\{ \mathcal{F}_{y}^{-1}[\varphi_n](y)\}_{n \in \mathbb{Z}} \subset \mathcal{S}(\mathbb{R})$ be the Littlewood-Paley decomposition, that is to say 
\EQQS{
\begin{cases}
 \varphi(\eta) \ge 0, \\
 \supp \varphi(\eta) = \{ \eta \,|\, 2^{-1} \le |\eta| \le 2\}, 
\end{cases}
}
\EQQS{
 \varphi_{n}(\eta) := \varphi(2^{-n}\eta),\ \sum_{n=-\I}^{\I}\varphi_{n}(\eta)=1\ (\, \eta \neq 0),\ 
 \psi(\eta) := 1 - \sum_{n=0}^{\I}\varphi_{n}(\eta). 
} 
Let $N,\, N' \in 2^{\mathbb{Z}}$ be dyadic numbers.  
$P_{N,\, N'}$ and $P_0 = P_{0,\, 0}$ denote 
\EQQS{
 &\mathcal{F}_{x}[P_{N,\, N'} f](\xi,\, \xi') := \varphi(|\xi|/N) \varphi(|\xi'|/N')  \mathcal{F}_x[f](\xi,\, \xi') = \varphi_n(|\xi|) \varphi_{n'}(|\xi'|) \mathcal{F}_x[f](\xi,\, \xi'), \\ 
 &\mathcal{F}_{x}[P_0 f](\xi,\, \xi') := \psi(|\xi|) \psi(|\xi'|) \mathcal{F}_x[f](\xi,\, \xi').   
} 

Let $S(t)={\rm exp}\{ t(i\laplacian_{\perp}-\partial_{x_d}) \} : L^2_x \to L^2_x$ be the linear operator associated to the Schr\"odinger equation: 
\EQQS{ 
 i(\partial_t + \partial_{x_d})u + \Delta_{\perp}u = 0. 
}
Namely, 
$\mathcal{F}_x [S(t)f](\xi, \xi') := e^{-it(|\xi|^2 + \xi')}\F_x f(\xi, \xi')$ for $\xi \in \mathbb{R}^{d-1},\, \xi' \in \mathbb{R}$.  
Similarly, we define the wave unitary operator $W_{\pm}(t):=\text{exp}\{ \mp it\, \omega \} : L^2_x \to L^2_x$ such that $\mathcal{F}_x  [W_{\pm}(t)g](\xi, \xi'):=e^{\pm it|\xi|}\mathcal{F}_x g(\xi, \xi')$ for $\xi \in \mathbb{R}^{d-1},\, \xi' \in \mathbb{R}$. 

Let $\mathcal{Z}$ be the set of finite partitions $-\I = t_0 < t_1 < \cdots < t_K = \I$.  
\begin{defn}
For $\{t_k\}_{k=0}^K \in \mathcal{Z}$ and $\{ \phi_k\}_{k=0}^{K-1}\subset L^2_x$ with   
$\sum_{k=0}^{K-1} \|\phi_k \|_{L^2_x}^2 = 1$ and $\phi_0=0$, we call the function $a : \mathbb{R} \to L^2_x$ given by 
\EQQS{
 a = \sum_{k=1}^K \1_{[t_{k-1},\, t_k)}\phi_{k-1} 
}
a $U^2$-atom. 
Furthermore, we define the atomic space 
\EQQS{
 U^2 := \Bigl\{ u=\sum_{j=1}^{\I}\lambda_j a_j \, \Bigl| \, a_j : U^2 \text{-atom} , \lambda_j \in \mathbb{C} \ such\ that\ \sum_{j=1}^{\I}|
          \lambda_j| < \I \Bigr\}, 
}
with norm 
\EQQS{
 \|u\|_{U^2} := \inf \Bigl\{ \sum_{j=1}^{\I}|\lambda_j| \, \Bigl| \, u=\sum_{j=1}^{\I}\lambda_j a_j, \lambda_j \in \mathbb{C}, a_j : U^2 \text{-atom} \Bigr\}.
} 
\end{defn}
\begin{defn} \label{def_of_V}
We define $V^2$ as the normed space of all functions $v:\mathbb{R} \to L^2_x$ such that 
$\lim_{t \to \pm \I}v(t)$ exist and for which the norm 
\EQQS{
\|v\|_{V^2} := \sup_{\{ t_k\}_{k=0}^K \in \mathcal{Z}} \Bigl(\sum_{k=1}^K\| v(t_k)-v(t_{k-1})\|_{L^2_x}^2 \Bigr)^{1/2}   
} 
is finite, where we use the convention that $v(-\I):=\lim_{t \to -\I}v(t)$ and $v(\I):=0.$
Note that $v(\infty)$ does not necessarily coincide with the limit at $\infty$.
Likewise, let $V_-^2$ denote the closed subspace of all $v \in V^2$ with $\lim_{t \to -\I}v(t)=0$.  
\end{defn}
\begin{defn}
For $A = S$ or $W_{\pm}$, we define 
\EQQS{
&\hspace{-55mm} {\rm(\hspace{.18em}i\hspace{.18em})}\ U^2_A = A(\cdot)U^2 \ \text{with}\ \text{norm}\ \|u\|_{U^2_A} = \|A(-\cdot)u\|_{U^2}, \\ 
&\hspace{-55mm} {\rm(\hspace{.08em}ii\hspace{.08em})}\ V^2_A = A(\cdot)V^2\ \text{with}\ \text{norm}\ \|u\|_{V^2_A} = \|A(-\cdot)u\|_{V^2}.  
}  
\end{defn}
See \cite{HHK}, \cite{HHK2} for more detail. 
\begin{defn}
 For a Hilbert space $H$ and a Banach space $E \subset C(\mathbb{R}; H)$, we define $B_r(H) := \{ f \in H \, |\, \|f\|_H \le r \}$ and 
\EQQS{ 
 E([0, T]) := \{ u \in C([0, T]; H)\, |\, \exists \tilde{u} \in E, \tilde{u}(t) = u(t),t \in [0, T] \},  
} 
endowed with the norm $\|u\|_{E([0, T])} = \inf \{ \|\tilde{u}\|_E \, |\, \tilde{u}(t) = u(t), t \in [0, T] \}$. 
\end{defn}
Hereafter, we denote $E$ instead of $E([0, T])$ for brevity. 
We introduce the solution spaces $X_S$ and $Z^{s_l,\, s'}_{W_{\pm}}$ below. 
\begin{defn}
Let $s_k > (d-1)/2, s_l > (d-2)/2, s' > 1/2$ and $\varepsilon > 0$ is sufficiently small such that $s_k - (d-1)/2 - \varepsilon > 0$. 
We define the function spaces $X_S, Y^{s_k,\, s'}_S, Y^{s_l,\, s'}_{W_{\pm}}, Z^{s_l,\, s'}_{W_{\pm}}$ as follows: 
\EQQS{ 
 &X_S := \{ u \in C([0,\, T] ; H^{s_k,\, s'}(\mathbb{R}^d)) \, |\, \|u\|_{X_S} < \infty \}, \\
 &Y^{s_k,\, s'}_S := \{ u \in C([0,\, T] ; H^{s_k,\, s'}(\mathbb{R}^d) \, |\, \|u\|_{Y^{s_k,\, s'}_S} < \infty \}, \\  
 &Z^{s_l,\, s'}_{W_{\pm}} := \{ n \in C([0,\, T] ; H^{s_l,\, s'}(\mathbb{R}^d)) \, |\, \|n\|_{Z^{s_l,\, s'}_{W_{\pm}}} < \infty \}, 
} 
where 
\EQQS{ 
 \|u\|_{X_S} = J + K + M,\qquad 
 \|u\|_{Y^{s_k,\, s'}_S} = J,\qquad 
 \|n\|_{Z^{s_l,\, s'}_{W_{\pm}}} = R,  
} 
\EQQS{ 
 J &:= \| P_0 u\|_{V^2_S} + 
 \Bigl( \sum_{N,\, N' \ge 1} N^{2s_k} N'^{2s'} \| P_{N,\, N'} u\|_{V^2_S}^2 \Bigr)^{1/2}, \\ 
 K &:= \max_{1 \le i \le d-1} \Bigl( \| P_0 u\|_{L^{\infty}_{x_i} L^2_{x_1, ... , x_{i-1}, x_{i+1}, ... , x_d, t}} \\ 
   &\qquad + \Bigl( \sum_{N,\, N' \ge 1} N^{2(s_k + 1/2)} N'^{2s'} \|P_{N,\, N'} u\|_{L^{\infty}_{x_i} L^2_{x_1, ... , x_{i-1}, x_{i+1}, ... , x_d, t}}^2  \Bigr)^{1/2} \Bigr), \\  
 M &:= \max_{1 \le i \le d-1} \Bigl( \| P_0 u\|_{L^2_{x_i} L^{\infty}_{x_1, ... , x_{i-1}, x_{i+1}, ... , x_{d-1}, t} L^2_{x_d}} \\  
   &\qquad + \Bigl( \sum_{N,\, N' \ge 1} N^{2(s_k-(d-1)/2-\varepsilon)} N'^{2s'} \|P_{N,\, N'}u\|_{L^2_{x_i} L^{\infty}_{x_1, ... , x_{i-1}, x_{i+1}, ... , x_{d-1}, t} L^2_{x_d}}^2 \Bigr)^{1/2} \Bigr), \\ 
 R &:= \| P_0 n\|_{U^2_{W_{\pm}}} + \Bigl( \sum_{N,\, N' \ge 1} N^{2s_l}  N'^{2s'} \|P_{N,\, N'} n\|_{U^2_{W_{\pm}}}^2 \Bigr)^{1/2}. 
} 
Similarly we define $Y^{s_l,\, s'}_{W_{\pm}}$ as follows. 
\EQQS{  
 &Y^{s_l,\, s'}_{W_{\pm}} := \{ n \in C([0,\, T] ; H^{s_l,\, s'}(\mathbb{R}^d) \, |\, \|n\|_{Y^{s_l,\, s'}_{W_{\pm}}} < \infty \}, \\ 
 &\|n\|_{Y^{s_l,\, s'}_{W_{\pm}}} := \|P_0 n\|_{V^2_{W_{\pm}}} + \Bigl( \sum_{N,\, N' \ge 1} N^{2s_k} N'^{2s'} \|P_{N,\, N'}n\|_{V^2_{W_{\pm}}}^2 \Bigr)^{1/2}.       
}
\end{defn} 

The integral equations for \eqref{degenerateZ'} are as follows.   
\EQQS{
 &I_{T,\, S}(n_+, n_-, u) = S(t)u_0 - \frac{i}{2} \int_0^t S(t-t') (n_+(t') + n_-(t')) u(t')\, dt', \\
 &I_{T,\, W_{\pm}}(u, v) = W_{\pm}(t)n_{\pm 0} \pm \int_0^t W_{\pm}(t-t')\, \omega \bigl( u(t') (\overline{v(t')})\bigr)\, dt'. 
}

The proof of the following proposition for $d=3$ can be found in \cite{LPS}. 
Analogously we can show \eqref{local_smoothing_est}.  

\begin{prop} \label{local_smoothing} 
For $d \ge 2,\, f \in L^2$, it holds that 
\EQS{ 
 &\| S(t) P_0 f\|_{L^{\infty}_{x_1} L^2_{x_2, ... , x_d, t}} 
      \lec \|P_0 f\|_2, \label{low_frequency_local_smoothing_est} \\ 
 &\| D_{x_1}^{1/2}\, S(t) f\|_{L^{\infty}_{x_1} L^2_{x_2, ... , x_d, t}} 
      \lec \|f\|_2, 
                   \label{local_smoothing_est}
}
where $\mathcal{F}_{x_1} [D_{x_1}^{1/2}\, u] = |\xi_1|^{1/2} \mathcal{F}_{x_1} [u]$. 
\eqref{low_frequency_local_smoothing_est}, \eqref{local_smoothing_est} hold exchanging $x_1$ for $x_i,\, i \in \{ 2, ... , $ \\ 
$d-1\}$.  
\end{prop}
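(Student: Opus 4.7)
The plan is to reduce the anisotropic estimate to the classical Kenig--Ruiz--Sj\"olin local smoothing inequality for the free Schr\"odinger group on $\R^{d-1}$, following the scheme of \cite{LPS} in the case $d=3$. The key is to exploit the fact that the first-order term $\p_{x_d}$ in the generator of $S(t)$ merely implements a translation in the $x_d$-variable. Indeed, since $\F_x[S(t)f](\xi,\xi')=e^{-it|\xi|^2}\,e^{-it\xi'}\F_x f(\xi,\xi')$, one gets the factorization
\[
 S(t)f(\tilde x,x_d)=\bigl(e^{it\Delta_\perp}[f(\cdot\,,\,x_d-t)]\bigr)(\tilde x),
\]
where $\tilde x=(x_1,\dots,x_{d-1})$ and $e^{it\Delta_\perp}$ denotes the free Schr\"odinger group on $\R^{d-1}$ acting only in the transversal variables.

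Starting from this factorization, I would substitute $y_d:=x_d-t$ at each fixed $t$ (unit Jacobian) to rewrite the squared left-hand side of \eqref{local_smoothing_est} as
\[
 \sup_{x_1\in\R}\int_{\R}\int_{\R}\int_{\R^{d-2}}\bigl|D_{x_1}^{1/2}(e^{it\Delta_\perp}f(\cdot,y_d))(\tilde x)\bigr|^2\,dx_2\cdots dx_{d-1}\,dt\,dy_d,
\]
and then use monotonicity to pull $\sup_{x_1}$ inside the outer $y_d$-integral, producing the majorant
\[
 \int_{\R}\bigl\|D_{x_1}^{1/2}\,e^{it\Delta_\perp}f(\cdot,y_d)\bigr\|_{L^\infty_{x_1}L^2_{x_2,\dots,x_{d-1},t}}^2\,dy_d.
\]
For each fixed slice $y_d$, the classical Kenig--Ruiz--Sj\"olin estimate on $\R^{d-1}$ bounds the integrand by $\lec \|f(\cdot,y_d)\|_{L^2(\R^{d-1})}^2$; integrating in $y_d$ closes \eqref{local_smoothing_est}. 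The low-frequency estimate \eqref{low_frequency_local_smoothing_est} follows from the same factorization combined with the analogous low-frequency transversal estimate (on the Fourier support of $P_0$ the factor $|\xi_1|^{1/2}$ is bounded, so no derivative is needed), and the remaining statements for $x_i$, $i\in\{2,\dots,d-1\}$, follow by the obvious symmetry of $\Delta_\perp$ in the transversal variables.

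The main conceptual step is recognizing the factorization above; once written, the rest is bookkeeping built on the known $(d-1)$-dimensional Kenig--Ruiz--Sj\"olin inequality. The only potentially delicate technical point is the interchange of $\sup_{x_1}$ with the $y_d$-integration, which is routine because the integrand is nonnegative. This is essentially the reason the author is content to state that the $d=3$ argument of \cite{LPS} extends analogously to all $d\ge 2$.
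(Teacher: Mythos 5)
Your treatment of the main estimate \eqref{local_smoothing_est} is essentially the argument the paper points to: the paper simply cites \cite{LPS} (where the $d=3$ case is proved by exactly this device) and says the general case is analogous. The factorization $S(t)f(\tilde x,x_d)=\bigl(e^{it\Delta_\perp}[f(\cdot,x_d-t)]\bigr)(\tilde x)$ is correct, the change of variables $y_d=x_d-t$ has unit Jacobian, the interchange of $\sup_{x_1}$ with the $y_d$-integral is legitimate for a nonnegative integrand, and the slicewise $(d-1)$-dimensional sharp smoothing estimate closes the bound after integrating $\|f(\cdot,y_d)\|_{L^2(\R^{d-1})}^2$ in $y_d$. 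So \eqref{local_smoothing_est} is fine.

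There is, however, a genuine gap in your derivation of the low-frequency estimate \eqref{low_frequency_local_smoothing_est}. You argue that ``on the Fourier support of $P_0$ the factor $|\xi_1|^{1/2}$ is bounded, so no derivative is needed,'' but this implication runs in the wrong direction: boundedness of $|\xi_1|^{1/2}$ lets you deduce the $D_{x_1}^{1/2}$-version \emph{from} an unweighted bound, not the reverse. To get \eqref{low_frequency_local_smoothing_est} from \eqref{local_smoothing_est} you would need $\|D_{x_1}^{-1/2}P_0f\|_2\lesssim\|P_0f\|_2$, which fails because the multiplier $|\xi_1|^{-1/2}$ is unbounded near $\xi_1=0$ (and $P_0$ does not exclude a neighbourhood of $\xi_1=0$). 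The failure is not merely formal: the Plancherel-in-$t$ computation behind the sharp smoothing identity produces the Jacobian $d\xi_1/(2|\xi_1|)$, so the global-in-time unweighted estimate is actually false for data whose Fourier transform does not vanish at $\xi_1=0$. The low-frequency bound must instead be proved as a local-in-time statement by a separate, more elementary argument — e.g.\ $\|S(t)P_0f\|_{L^\infty_{x_1}L^2_{x_2,\dots,x_d,t\in[0,T]}}\lesssim T^{1/2}\sup_t\|\LR{\nabla_{x_1}}^{1/2+}S(t)P_0f\|_{L^2_x}\lesssim_T\|P_0f\|_2$, using Sobolev embedding in $x_1$ and unitarity of $S(t)$; this is precisely the mechanism the author uses for the corresponding low-frequency inhomogeneous estimate in Proposition \ref{inhomo_local_smoothing}. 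You should supply such an argument in place of the one-line reduction.
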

\begin{rem} \label{loc_smoothing_U^2}
 From \eqref{local_smoothing_est}, we have $\|D^{1/2}_{x_1}f\|_{L^{\infty}_{x_1} L^2_{x_2, ... , x_d, t}} \lec \|f\|_{U^2_S}$. 
\end{rem}

\begin{prop} \label{maximal_function}
For $d \ge 2,\, s > (d-1)/2$ and $g \in L^2,\, h \in H^s_{\tilde{x}} L^2_{x_d}$, it holds that 
\EQS{
 &\| S(t) P_0 g\|_{L^2_{x_1} L^{\infty}_{x_2, ... , x_{d-1}, t} L^2_{x_d}} \lec \|P_0 g\|_2, 
      \label{low_frequency_maximal_function_estimate}  \\   
 &\| S(t) h\|_{L^2_{x_1} L^{\infty}_{x_2, ... , x_{d-1}, t} L^2_{x_d}} \lec \|h\|_{H^s_{\tilde{x}} L^2_{x_d}}. 
            \label{maximal_function_estimate}
} 
The above estimates hold exchanging $x_1$ for $x_i,\, i \in \{ 2, ... , d-1\}$. 
\end{prop}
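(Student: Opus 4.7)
The plan is to exploit the fact that the degenerate Schr\"odinger propagator factors explicitly as a free Schr\"odinger evolution in the transverse variables $\tilde x = (x_1,\ldots,x_{d-1})$ composed with a pure translation in $x_d$. Indeed, reading off the symbol $e^{-it(|\xi|^2 + \xi')}$ gives
\[
 S(t) h(\tilde x, x_d) = \bigl(e^{it\Delta_\perp} h(\cdot, x_d - t)\bigr)(\tilde x),
\]
where $e^{it\Delta_\perp}$ is the free Schr\"odinger group on $\R^{d-1}$ acting only in $\tilde x$. Because the dispersive dynamics sits entirely in $(d-1)$ variables while the $x_d$ direction is transported rigidly, the estimate should reduce to the classical Kenig--Ponce--Vega maximal function bound on $\R^{d-1}$; this is precisely why the threshold is $s > (d-1)/2$ rather than $s > d/2$.

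For \eqref{maximal_function_estimate} I would first compute the innermost $L^2_{x_d}$ norm for fixed $(\tilde x, t)$ and change variables $y_d = x_d - t$, yielding
\[
 \|S(t) h(\tilde x, \cdot)\|_{L^2_{x_d}}^2 = \int_{\R} \bigl| (e^{it\Delta_\perp} h(\cdot, y_d))(\tilde x) \bigr|^2 \, dy_d.
\]
Then Minkowski's integral inequality lets me pull $L^2_{y_d}$ past $L^\infty_{x_2,\ldots,x_{d-1},t}$, and Fubini lets me pull it past $L^2_{x_1}$. The net effect is
\[
 \|S(t) h\|_{L^2_{x_1} L^\infty_{x_2,\ldots,x_{d-1},t} L^2_{x_d}}^2
 \le \int_{\R} \|e^{it\Delta_\perp} h(\cdot, y_d)\|_{L^2_{x_1} L^\infty_{x_2,\ldots,x_{d-1},t}}^2 \, dy_d.
\]
At this point I invoke the Kenig--Ponce--Vega maximal function estimate for the free Schr\"odinger group on $\R^{d-1}$, which for $s > (d-1)/2$ gives
\[
 \|e^{it\Delta_\perp} f\|_{L^2_{x_1} L^\infty_{x_2,\ldots,x_{d-1},t}} \lec \|f\|_{H^s(\R^{d-1})},
\]
applied to $f = h(\cdot, y_d)$ for each $y_d$. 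Squaring and integrating in $y_d$ produces exactly $\|h\|_{H^s_{\tilde x} L^2_{x_d}}^2$, and the permutation symmetry in the $\tilde x$ directions gives the version with $x_1$ replaced by any $x_i$, $i \in \{2,\ldots,d-1\}$.

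The low-frequency bound \eqref{low_frequency_maximal_function_estimate} follows by the same scheme applied to $P_0 g$: since $P_0 g(\cdot, y_d)$ has transverse frequency support in $|\xi| \lesssim 1$, the multiplier $\LR{\xi}^s \psi(|\xi|)$ is bounded, so $\|P_0 g(\cdot, y_d)\|_{H^s_{\tilde x}} \lec \|P_0 g(\cdot, y_d)\|_{L^2_{\tilde x}}$, collapsing the right-hand side to $\|P_0 g\|_2$. The only step that requires care is the exchange of mixed norms via Minkowski: pulling $L^2_{y_d}$ past $L^\infty_{x_2,\ldots,x_{d-1},t}$ is the decisive move, and the reverse direction would fail, so one has to set up the chain of inequalities in the correct order. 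Beyond that point the argument is essentially a change of variables plus an application of a known $(d-1)$-dimensional estimate; the analogue in \cite{LPS} for $d=3$ carries over verbatim once the factorization is in place.
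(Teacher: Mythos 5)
Your argument is correct and is exactly the reduction the paper has in mind: the paper omits the proof, remarking only that the proposition "is proved by the same manner as the $(d-1)$-dimensional maximal function estimate for the Schr\"odinger equation," and your factorization $S(t)h = e^{it\Delta_\perp}h(\cdot,x_d-t)$ followed by the change of variables, the Minkowski/Fubini exchange of the $L^2_{x_d}$ norm (in the correct direction, as you note), and the Kenig--Ponce--Vega bound on $\R^{d-1}$ supplies precisely that reduction. The low-frequency case via boundedness of $\LR{\xi}^s\psi(|\xi|)$ is likewise the intended argument.
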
 
 
Proposition \ref{maximal_function} 
is proved by the same manner as $d-1$ dimensional maximal function estimate for the Schr\"odinger equation, hence we omit the proof.

\begin{prop} \label{inhomo_L^infty_tL^2_x}
For $d \ge 2, f \in \mathcal{S}(\mathbb{R}^{d+1})$, it holds that 
\EQS{
 &\Bigl\| \int_0^t S(t-t') P_0 f(t')\, dt' \Bigr\|_{L^{\infty}_t L^2_x} 
       \lec \|P_0 f\|_{L^1_{x_1} L^2_{x_2, ... , x_d, t}}, 
                    \label{lowfreqconti} \\ 
 &\Bigl\| D^{1/2}_{x_1} \int_0^t S(t-t') f(t')\, dt' \Bigr\|_{L^{\infty}_t L^2_x} 
       \lec \|f\|_{L^1_{x_1} L^2_{x_2, ... , x_d, t}}.  
                    \label{conti} 
}
The above estimates hold exchanging $x_1$ for $x_i,\, i \in \{ 2, ... , d-1\}$. 
\end{prop}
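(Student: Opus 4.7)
The plan is to derive both estimates by a Kato-type duality ($TT^*$) argument that converts the homogeneous local smoothing bounds of Proposition~\ref{local_smoothing} into retarded Duhamel estimates. I treat \eqref{conti} in detail; the bound \eqref{lowfreqconti} follows identically with the Fourier multiplier $D^{1/2}_{x_1}$ replaced by the Littlewood--Paley projection $P_0$ and \eqref{low_frequency_local_smoothing_est} used in place of \eqref{local_smoothing_est}.

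Fix $t \in [0,T]$ and set $F(t,\cdot) := D^{1/2}_{x_1}\int_0^t S(t-t')f(t')\,dt'$. Pair against an arbitrary $\phi \in L^2_x$ with $\|\phi\|_{L^2_x} = 1$. Since $S(\cdot)$ is unitary on $L^2_x$ with $S(s)^* = S(-s)$ and commutes with the self-adjoint multiplier $D^{1/2}_{x_1}$, moving both operators onto the $\phi$-side gives
\[
\langle F(t,\cdot),\phi\rangle_{L^2_x} \;=\; \int_0^t \bigl\langle f(t',\cdot),\; D^{1/2}_{x_1}S(t'-t)\phi\bigr\rangle_{L^2_x}\, dt'.
\]

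Apply Cauchy--Schwarz in $(x_2,\dots,x_d)$ and in $t' \in [0,t]$, followed by H\"older in $x_1$ with $f$ placed in $L^1_{x_1}$ and the dual factor in $L^\infty_{x_1}$:
\[
|\langle F(t,\cdot),\phi\rangle_{L^2_x}| \;\lec\; \|f\|_{L^1_{x_1}L^2_{x_2,\dots,x_d,t'}} \cdot \bigl\|D^{1/2}_{x_1}S(\cdot-t)\phi\bigr\|_{L^\infty_{x_1}L^2_{x_2,\dots,x_d,t'}}.
\]
By time-translation invariance and \eqref{local_smoothing_est}, the second factor is bounded by $\|\phi\|_{L^2_x} = 1$. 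Taking the supremum over such $\phi$, and then over $t \in [0,T]$, yields \eqref{conti}. The permutation $x_1 \leftrightarrow x_i$ for $i \in \{2,\dots,d-1\}$ is immediate from the corresponding symmetry already recorded in Proposition~\ref{local_smoothing}.

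I do not expect a substantive obstacle: the key structural observation is that Cauchy--Schwarz in $t'$ merely enlarges the time domain of integration to one on which the local smoothing estimate applies with a uniform constant, so neither a Christ--Kiselev type lemma nor any finer control of the endpoint $t$ beyond unitarity of $S(\cdot)$ is required. The bound on $F(t,\cdot)$ that emerges is manifestly uniform in $t \in [0,T]$, which immediately upgrades to the desired $L^\infty_t L^2_x$ estimate.
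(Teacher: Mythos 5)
Your argument is correct and is essentially the same as the paper's: the paper invokes the dual estimate of the local smoothing bound, uses unitarity of $S(t)$, and handles the retarded integral by replacing $f$ with $\1_{[0,t]}f$ before taking the supremum in $t$, which is exactly your duality pairing with the truncation absorbed by enlarging the $t'$-domain. The only cosmetic difference is that the paper cites \cite{LPS} Proposition 2.1 for \eqref{conti} rather than writing out the same $TT^*$ computation you give explicitly.
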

\begin{proof}
The dual estimate of Proposition \ref{local_smoothing}  \eqref{low_frequency_local_smoothing_est} is 
\EQQS{
 \Bigl\| \int_{-\infty}^{\infty} S(-t') P_0 f(t') dt' \Bigr\|_{L^2_x} \lec \|P_0 f\|_{L^1_{x_1} L^2}. 
} 
Since $S(t)$ is the unitary operator on $L^2_x$, 
\EQQS{
 \Bigl\| \int_{-\infty}^{\infty} S(t-t') P_0 f(t') dt' \Bigr\|_{L^2_x} \lec \|P_0 f\|_{L^1_{x_1} L^2}. 
}
Replacing $f(t')$ by $\1_{[0,\, t]}(t') f(t')$, then take $L^{\infty}_t$ norm and \eqref{lowfreqconti} follows. 
\eqref{conti} follows from \cite{LPS} Proposition 2.1 for $d=3$. 
The other cases are treated similarly, hence we omit the proof. 
\end{proof}

\begin{prop} \label{inhomo_local_smoothing}
For $d \ge 2, f \in \mathcal{S}(\mathbb{R}^{d+1})$, the following estimates hold:  
\EQS{
 &\Bigl\| \int_0^t S(t-t') P_0 f(t')\, dt'\Bigr\|_{L^{\infty}_{x_1} L^2_{x_2, ... , x_d, t}} 
       \lec \|P_0 f\|_{L^1_{x_1} L^2_{x_2, ... , x_d, t}}, 
                    \label{lowfreqsmoothing} \\ 
 &\Bigl\| \partial_{x_1} \int_0^t S(t-t') f(t')\, dt'\Bigr\|_{L^{\infty}_{x_1} L^2_{x_2, ... , x_d, t}} 
       \lec \|f\|_{L^1_{x_1} L^2_{x_2, ... , x_d, t}}, 
                    \label{smoothing}  
} 
The above estimates hold exchanging $x_1$ for $x_i,\, i \in \{ 2, ... , d-1\}$.
\end{prop}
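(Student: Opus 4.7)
The plan is to mirror the duality-plus-composition ($TT^*$-type) argument used for Proposition \ref{inhomo_L^infty_tL^2_x}, replacing the role of the trivial $L^\infty_t L^2_x$ energy estimate with the homogeneous local smoothing estimate of Proposition \ref{local_smoothing}. The key observation is that the right-hand sides in \eqref{lowfreqsmoothing} and \eqref{smoothing} match precisely the dual norms of the left-hand sides of \eqref{low_frequency_local_smoothing_est} and \eqref{local_smoothing_est}.

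For \eqref{lowfreqsmoothing}, I would first dualize \eqref{low_frequency_local_smoothing_est}, using that $S(t)^* = S(-t)$, to obtain
\EQQS{
 \Bigl\| \int_{-\I}^{\I} S(-t') P_0 f(t')\, dt' \Bigr\|_{L^2_x} \lec \|P_0 f\|_{L^1_{x_1} L^2_{x_2, ... , x_d, t}}.
}
Since $S(t)$ is unitary on $L^2_x$, applying $S(t)$ and then using \eqref{low_frequency_local_smoothing_est} a second time (here is the $TT^*$ composition) gives
\EQQS{
 \Bigl\| \int_{-\I}^{\I} S(t-t') P_0 f(t')\, dt' \Bigr\|_{L^{\infty}_{x_1} L^2_{x_2, ... , x_d, t}} \lec \|P_0 f\|_{L^1_{x_1} L^2_{x_2, ... , x_d, t}}.
}
For \eqref{smoothing} the same chain is run with \eqref{local_smoothing_est}: dualization produces $\|\int D_{x_1}^{1/2} S(-t') f(t')\, dt' \|_{L^2_x} \lec \|f\|_{L^1_{x_1} L^2}$, then applying $D_{x_1}^{1/2} S(t)$ and invoking \eqref{local_smoothing_est} yields the bound with $D_{x_1}$ (equivalently $\p_{x_1}$ modulo the Hilbert transform in $x_1$) in front of the full-line time integral.

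The remaining issue is to pass from $\int_{-\I}^{\I}$ to $\int_0^t$. In the previous proposition this was trivial because the outer norm was $L^\infty_t$, so one could replace $f(t')$ by $\1_{[0,t]}(t')f(t')$ pointwise in $t$. Here the outer norm is $L^2_t$ on the left and $L^2_{t'}$ on the right, so the reduction requires a Christ–Kiselev type argument in the time variable (the strict inequality between the exponents is supplied by the $L^1_{x_1} \to L^\infty_{x_1}$ asymmetry, or alternatively by viewing the operator through its kernel decomposition as in \cite{LPS}). This truncation step is where I expect the only real technical subtlety; the $TT^*$ step itself is mechanical.

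Once this is done for $x_1$, the analogous statements for $x_i$ with $i \in \{2, \ldots, d-1\}$ follow by permuting the perpendicular coordinates, exactly as in Proposition \ref{local_smoothing}. Given that \cite{LPS} Proposition 2.1 was already cited to supply the parallel argument for \eqref{conti}, I would simply cite that reference for the $d=3$ case of \eqref{smoothing} and note that the low-frequency variant \eqref{lowfreqsmoothing} and the higher-dimensional cases are obtained by the same scheme, omitting the routine details.
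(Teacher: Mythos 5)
Your $TT^*$ step is fine and reproduces the \emph{untruncated} (full-line in $t'$) versions of both estimates, but the passage from $\int_{-\infty}^{\infty}$ to $\int_0^t$ is a genuine gap, not a routine subtlety. The Christ--Kiselev lemma requires a strict gap between the exponents in the variable being truncated, i.e.\ in time; here the time norm is the inner $L^2_{t'}$ on the source side and the inner $L^2_t$ on the target side, so one sits exactly at the forbidden endpoint $p=q=2$. The $L^1_{x_1}\to L^\infty_{x_1}$ asymmetry does not supply the missing gap: in the lattice form of Christ--Kiselev one needs $\sum_j\|f\1_{I_j}\|^r_{L^1_{x_1}L^2_{t}}\le\|f\|^r_{L^1_{x_1}L^2_t}$ on the source and $\|g\|^{\tilde r}_{L^\infty_{x_1}L^2_t}\le\sum_j\|g\1_{I_j}\|^{\tilde r}_{L^\infty_{x_1}L^2_t}$ on the target with $r<\tilde r$, and (by Minkowski) both hold precisely with $r=\tilde r=2$, so the hypothesis fails. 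The retarded sharp Kato smoothing estimate \eqref{smoothing} is exactly the statement that survives this endpoint, and it is obtained in \cite{LPS} (following Kenig--Ponce--Vega) by a direct computation with the explicit kernel of $\partial_{x_1}\int_0^t S(t-t')\,dt'$, not by $TT^*$ plus an abstract truncation lemma. A second, smaller issue: $D_{x_1}$ and $\partial_{x_1}$ differ by a Hilbert transform in $x_1$, which is not bounded on the $L^\infty_{x_1}$- and $L^1_{x_1}$-based mixed norms appearing here, so ``equivalently modulo the Hilbert transform'' does not close the argument; the kernel computation produces $\partial_{x_1}$ directly. Your fallback of simply citing \cite{LPS} Proposition 2.1 is in fact what the paper does for \eqref{smoothing}, but then the $TT^*$ scaffolding is doing no work.

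For \eqref{lowfreqsmoothing} the paper uses a much softer route that avoids truncation entirely and that your scheme misses: since $P_0$ localizes to frequencies $\lesssim 1$, one bounds the $L^\infty_{x_1}L^2_{x_2,\dots,x_d,t}$ norm by Sobolev embedding in $x_1$ and H\"older in time by $T^{1/2}\,\| |\nabla_{x_1}|^{1/2+}(\cdot)\|_{L^\infty_t L^2_x}$, absorbs the harmless low-frequency derivative, and then invokes \eqref{lowfreqconti}, which already carries the $\int_0^t$ and whose own truncation was trivial because its outer norm is $L^\infty_t$.
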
 
\begin{proof}   
From the Sobolev inequality and \eqref{lowfreqconti}, we have 
\EQQS{
 \Bigl\| \int_0^t S(t-t') P_0 f(t') dt' \Bigr\|_{L^{\infty}_{x_1} L^2} &\lec T^{1/2} \Bigl\| |\nabla_{x_1}|^{1/2+} \int_0^t S(t-t') P_0 f(t') dt' \Bigr\|_{L^{\infty}_t L^2_x} \\  
 &\lec \Bigl\| \int_0^t S(t-t') P_0 f(t') dt' \Bigr\|_{L^{\infty}_t L^2_x} \\ 
 &\lec \|P_0 f\|_{L^1_{x_1} L^2}. 
}  
Thus \eqref{lowfreqsmoothing} follows. 
The proof of \eqref{smoothing} for $d=3$ can be found in \cite{LPS} Proposition 2.1. Analogously we have \eqref{smoothing}. 
\end{proof}

\begin{prop} \label{inhomo_maximal_fcn} 
 Let $d \ge 2,\, s > (d-1)/2,\, s' > 1/2,\, f \in \mathcal{S}(\mathbb{R}^{d+1})$. It holds that 
\EQS{
 &\Bigl\| \int_0^t S(t-t')P_0 f(t')\, dt' \Bigr\|_{L^2_{x_1} L^{\infty}_{x_2, ... , x_{d-1}, t} L^2_{x_d}}  
       \lec \|P_0 f\|_{L^1_{x_1} L^2_{x_2, ... , x_d, t}}, 
                    \label{lowfreqmaximal_2} \\ 
 &\Bigl\| D_{x_1}^{1/2} \int_0^t S(t-t') P_{N,\, N'}f(t')\, dt' \Bigr\|_{L^2_{x_1} L^{\infty}_{x_2, ... , x_{d-1}, t} L^2_{x_d}} 
       \lec N^s \|P_{N,\, N'}f\|_{L^1_{x_1} L^2_{x_2, ... , x_d, t}}. 
                    \label{maximal2}  
} 
The above estimates hold exchanging $x_1$ for $x_i,\, i \in \{ 2, ... , d-1\}$. 
\end{prop}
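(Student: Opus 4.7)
The plan is to mirror the strategy of Propositions~\ref{inhomo_L^infty_tL^2_x} and~\ref{inhomo_local_smoothing}: reduce the mixed-norm estimate to a snapshot-in-time bound via Minkowski's integral inequality, then combine the Sobolev--Bernstein embedding in the transverse variables with the $L^{\infty}_t L^{2}_x$ Duhamel bound already established in Proposition~\ref{inhomo_L^infty_tL^2_x} (or its low-frequency counterpart~\eqref{lowfreqconti}).

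Concretely, since $2 \le \infty$ and the joint supremum equals the iterated supremum, Minkowski's integral inequality applied twice (first swapping $L^{2}_{x_1}$ past $L^{\infty}_t$, then past $L^{\infty}_{x_2,\ldots,x_{d-1}}$) yields
\[
 \|u\|_{L^{2}_{x_1} L^{\infty}_{x_2,\ldots,x_{d-1},\,t} L^{2}_{x_d}}
 \;\le\; \sup_{t}\, \|u(t)\|_{L^{\infty}_{x_2,\ldots,x_{d-1}} L^{2}_{x_1, x_d}}.
\]
Choosing $u = D_{x_1}^{1/2} \int_0^t S(t-t') P_{N,N'} f(t')\,dt'$, the Sobolev embedding $H^{s}(\mathbb{R}^{d-2}) \hookrightarrow L^{\infty}(\mathbb{R}^{d-2})$ together with Bernstein for the frequency-localised function $u(t) = P_{N,N'} u(t)$ gives, for any $s > (d-2)/2$ (and in particular for the standing hypothesis $s > (d-1)/2$),
\[
 \|u(t)\|_{L^{\infty}_{x_2,\ldots,x_{d-1}} L^{2}_{x_1, x_d}} \;\lec\; N^{s}\, \|u(t)\|_{L^{2}_{x}},
\]
while Proposition~\ref{inhomo_L^infty_tL^2_x} controls $\sup_{t}\|u(t)\|_{L^{2}_{x}}$ by $\|P_{N,N'} f\|_{L^{1}_{x_1} L^{2}_{x_2,\ldots,x_d,\,t}}$. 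Chaining these bounds yields~\eqref{maximal2}. The low-frequency inequality~\eqref{lowfreqmaximal_2} is obtained in exactly the same way: Bernstein for $P_0$ costs no power of $N$, and~\eqref{lowfreqconti} replaces Proposition~\ref{inhomo_L^infty_tL^2_x}. The estimates with $x_1$ replaced by $x_i$ for $2 \le i \le d-1$ follow by symmetry.

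No substantial obstacle is foreseen: the argument is a routine interplay of Minkowski's integral inequality, the Sobolev--Bernstein embedding, and the previously established $L^{\infty}_t L^{2}_x$ retarded bound. The only bookkeeping points are (i) to verify that both Minkowski swaps are legitimate---they are, since in each case the outer index $2$ is no larger than the inner index $\infty$---and (ii) to observe that the dimensional threshold $(d-2)/2$ arising from the Sobolev embedding in $(x_2,\ldots,x_{d-1})$ lies safely below the hypothesis $s > (d-1)/2$, so that the power $N^s$ stated in~\eqref{maximal2} is indeed available.
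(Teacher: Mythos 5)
Your argument breaks down at the very first step, and the failure is not reparable within your strategy. The claimed interchange
\begin{equation*}
 \|u\|_{L^{2}_{x_1} L^{\infty}_{x_2,\ldots,x_{d-1},\,t} L^{2}_{x_d}}
 \;\le\; \sup_{t}\, \|u(t)\|_{L^{\infty}_{x_2,\ldots,x_{d-1}} L^{2}_{x_1, x_d}}
\end{equation*}
goes in the wrong direction. Minkowski's integral inequality states $\|\,\|f\|_{L^q_y}\|_{L^p_x} \le \|\,\|f\|_{L^p_x}\|_{L^q_y}$ only when the \emph{inner} exponent $q$ does not exceed the \emph{outer} exponent $p$; to pull the supremum in $(x_2,\ldots,x_{d-1},t)$ outside the $L^2_{x_1}$ norm you would need $\infty \le 2$. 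The true inequality is the reverse one, $\sup_t \|u(t)\|_{L^{\infty}_{x_2,\ldots,x_{d-1}} L^{2}_{x_1,x_d}} \le \|u\|_{L^{2}_{x_1} L^{\infty}_{x_2,\ldots,x_{d-1},t} L^{2}_{x_d}}$, which is useless here. A concrete counterexample (take $d=2$ for simplicity): for $u(t,x_1,x_2)=g(x_1-t)h(x_2)$ with $g,h$ Schwartz bumps, the right-hand side equals $\|g\|_{L^2}\|h\|_{L^2}<\infty$ while $\sup_t|g(x_1-t)|=\|g\|_{L^\infty}$ for every $x_1$, so the left-hand side is infinite. This is exactly why the maximal function estimate, Proposition \ref{maximal_function}, is a nontrivial theorem rather than a consequence of Sobolev embedding plus unitarity of $S(t)$; if your interchange were valid, \eqref{maximal_function_estimate} itself would follow trivially and, moreover, with the threshold $s>(d-2)/2$ that you derive, which is below the correct one $s>(d-1)/2$. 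That discrepancy in the exponent is itself a warning sign.

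The paper's proof instead composes two genuinely nontrivial homogeneous estimates: writing the Duhamel term as $S(t)\int S(-t')f(t')\,dt'$ (up to the time truncation, handled by replacing $f$ with $\1_{[0,t]}f$), it applies the maximal function estimate \eqref{maximal_function_estimate} to the outer $S(t)$ --- which is where the factor $N^{s}$ with $s>(d-1)/2$ comes from after the frequency localisation $P_{N,N'}$ --- and the dual of the local smoothing estimate \eqref{local_smoothing_est} to the inner integral, which absorbs the $D_{x_1}^{1/2}$ derivative and converts the $L^1_{x_1}L^2_{x_2,\ldots,x_d,t}$ data norm into $L^2_x$. You would need to replace your Minkowski step with an appeal to Proposition \ref{maximal_function}; the remainder of your chaining (using \eqref{conti} or \eqref{lowfreqconti} for the inner integral) is compatible with that repair, but as written the proof does not establish the proposition.
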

\begin{proof}
We only check \eqref{maximal2}. 
We assume $|\xi_1| = \max_{1 \le i \le d-1} |\xi_i|$. 
From \eqref{maximal_function_estimate} and the dual estimate of \eqref{local_smoothing_est}
\EQQS{
 \Bigl\| \int_{\mathbb{R}} S(-t') D_{x_1}^{1/2}f(t')\, dt' \Bigr\|_{L^2_x} \lec \|f\|_{L^1_{x_1} L^2_{x_2, ... , x_d, t}},  
}
we obtain for $s > (d-1)/2$ 
\EQQS{
 \Bigl\| D_{x_1}^{1/2} \int_0^T S(t-t') P_{N,\, N'} f(t')\, dt' \Bigr\|_{L^2_{x_1} L^{\infty}_{x_2, ... , x_{d-1}, t} L^2_{x_d}} \lec N^s \|P_{N,\, N'} f\|_{L^1_{x_1} L^2_{x_2, ... , x_d, t}}.  
} 
Then we replace $f(t')$ by $\1_{[0, t]}f(t')$, we obtain the desired result. 
\end{proof} 
We introduce the Strichartz estimate which was proved by \cite{BL} when $d=3$. 
\begin{prop} \label{Strichartz_est} 
 Let $d \ge 2, u_0 \in L^2$. For the admissible pair $(q, r)$, namely $2/q = (d-1)(1/2 - 1/r), (q, r, d) \neq (2, \infty, 3)$, it holds that 
\EQQS{ 
 \|S(t)u_0\|_{L^q_t L^r_{\tilde{x}} L^2_{x_d}} \lec \|u_0\|_2.    
}
\end{prop}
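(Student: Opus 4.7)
The plan is to reduce the claim to the classical Strichartz estimates for the free Schr\"odinger group in $\mathbb{R}^{d-1}$ by exploiting the fact that the extra transport term $\partial_{x_d}$ in the operator $i(\partial_t + \partial_{x_d}) + \Delta_\perp$ contributes only a pure translation in the $x_d$ variable. Since the Fourier symbol factors as $e^{-it(|\xi|^2 + \xi')} = e^{-it|\xi|^2}\, e^{-it\xi'}$, one has
\EQQS{
 S(t)u_0(\tilde x, x_d) = \bigl[ e^{it\Delta_\perp} u_0(\cdot,\, x_d - t) \bigr](\tilde x),
}
where $e^{it\Delta_\perp}$ is the standard Schr\"odinger group acting only in the $\tilde x \in \mathbb{R}^{d-1}$ variables.

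For each fixed $(t, \tilde x)$ the change of variable $y_d = x_d - t$ removes the time-dependent shift, giving
\EQQS{
 \|S(t)u_0(\tilde x, \cdot)\|_{L^2_{x_d}}^2 = \int_{\mathbb{R}} \bigl| [e^{it\Delta_\perp} u_0(\cdot, y_d)](\tilde x) \bigr|^2\, dy_d.
}
I would then apply Minkowski's integral inequality twice, valid because $q, r \ge 2$, in order to move the $L^2_{y_d}$ norm outside the $L^q_t L^r_{\tilde x}$ norm:
\EQQS{
 \|S(t)u_0\|_{L^q_t L^r_{\tilde x} L^2_{x_d}} \le \Bigl( \int_{\mathbb R} \|e^{it\Delta_\perp} u_0(\cdot, y_d)\|_{L^q_t L^r_{\tilde x}}^2\, dy_d \Bigr)^{1/2}.
}

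For each fixed $y_d$ the function $u_0(\cdot, y_d)$ lies in $L^2_{\tilde x}(\mathbb R^{d-1})$, and the admissibility hypothesis $2/q = (d-1)(1/2-1/r)$ is exactly the Schr\"odinger admissibility in $\mathbb R^{d-1}$; the exclusion $(q, r, d) \neq (2, \infty, 3)$ matches precisely the two-dimensional endpoint obstruction for Strichartz estimates. Thus the classical Keel--Tao $(d-1)$-dimensional Strichartz estimate yields $\|e^{it\Delta_\perp} u_0(\cdot, y_d)\|_{L^q_t L^r_{\tilde x}} \lesssim \|u_0(\cdot, y_d)\|_{L^2_{\tilde x}}$, and Fubini in $y_d$ then produces $\|u_0\|_{L^2_x}$ on the right. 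I do not expect any genuine obstacle. The point worth emphasizing is that this reduction depends crucially on the $L^2_{x_d}$ placement in the mixed norm: the time-dependent translation in $x_d$ can be absorbed for free only by a translation-invariant norm, which is consistent with the equation having no dispersion in the $x_d$ direction.
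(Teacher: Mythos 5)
Your reduction is correct: the symbol factorization $e^{-it(|\xi|^2+\xi')}=e^{-it|\xi|^2}e^{-it\xi'}$ shows $S(t)$ is the $(d-1)$-dimensional Schr\"odinger flow composed with a translation in $x_d$, the inner $L^2_{x_d}$ norm is translation invariant, and Minkowski (valid since $q,r\ge 2$) plus the Keel--Tao estimate in dimension $d-1$ (whose excluded endpoint $(2,\infty,2)$ is exactly $(q,r,d)=(2,\infty,3)$) finishes the argument. The paper itself omits the proof and simply cites Barros--Linares for $d=3$; your argument is the standard one behind that citation, extended to general $d$.
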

\begin{rem} \label{Strichartz_V^2} 
 For $d \ge 3$, $(q, r) = (4, 2(d-1)/(d-2))$ is an admissible pair, from the property of $U^p, V^p$, 
 it holds that $V^2_S \hookrightarrow U^4_S \hookrightarrow L^4_t L^{2(d-1)/(d-2)}_{\tilde{x}} L^2_{x_d}$. 
 When $d = 2$, $(q, r) = (4, \infty)$ is an admissible pair, then $V^2_S \hookrightarrow U^4_S \hookrightarrow L^4_t L^{\infty}_{x_1} L^2_{x_2}$ holds.  
 See \cite{HHK}. 
\end{rem}
The following trilinear estimates will be applied in the proof of Proposition \ref{BE}. 

\begin{lem} \label{trilinear}
We denote $P_{N_1,\, N_1'} n = n_{N,\, N'},\, u_{N_2,\, N_2'} = u_{N_2,\, N_2'},\, P_{N,\, N'} v = v_{N,\, N'}$ for dyadic numbers $N_1,\, N_2,\, N,\, N_1',\, N_2',\, N'$. Then the following estimates hold:  
$\rm(\hspace{.18em}i\hspace{.18em})$ 
\EQS{
 &\Bigl| \int_{\mathbb{R}^{d+1}} \1_{[0,\, T]} n_{N_1,\, N_1'} u_{N_2,\, N_2'} \overline{v_{N,\, N'}}\, dxdt \Bigr| \notag \\  
 &\lec T^{1/2} N^{-1/2} N_{\min}'^{1/2+} \|n_{N_1,\, N_1'}\|_{L^{\infty}_t L^2_x} \|u_{N_2,\, N_2'}\|_{L^2_{x_1} L^{\infty}_{x_2, ... , x_{d-1}, t} L^2_{x_d}} \|v_{N,\, N'}\|_{U^2_S},  
                          \label{O_11}
}  
$\rm(\hspace{.08em}ii\hspace{.08em})$ If $d \ge 3$, we have 
\EQS{
 &\Bigl| \int_{\mathbb{R}^{d+1}} \1_{[0,\, T]} n_{N_1,\, N_1'} u_{N_2,\, N_2'} \overline{v_{N,\, N'}}\, dxdt \Bigr| \notag \\  
 &\lec T^{1/2} N_1^{(d-3)/2+} N_{\min}'^{1/2+} \|n_{N_1,\, N_1'}\|_{L^{\infty}_t L^2_x} \|u_{N_2,\, N_2'}\|_{V^2_S} \|v_{N,\, N'}\|_{V^2_S},  
             \label{3dim_1}  
}
$\rm(\hspace{.08em}iii\hspace{.08em})$ If $d=2$, we have 
\EQS{
 &\Bigl| \int_{\mathbb{R}^3} \1_{[0,\, T]} n_{N_1,\, N_1'} u_{N_2,\, N_2'} \overline{v_{N,\, N'}}\, dxdt \Bigr|  
 \lec T^{3/4} N_{\min}'^{1/2} \|n_{N_1,\, N_1'}\|_{L^{\infty}_t L^2_x} \|u_{N_2,\, N_2'}\|_{V^2_S} \|v_{N,\, N'}\|_{V^2_S},  
              \label{2d_1} 
}
where $N_{\min}':=\min\{ N',\, N_1',\, N_2'\}$. 
\eqref{O_11} holds exchanging $x_1$ for $x_i,\ i \in \{ 2, ... , d-1 \}$. 
\end{lem}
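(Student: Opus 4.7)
All three estimates follow the same overall pattern: rewrite the triple integral as an iterated mixed norm, apply H\"older's inequality in each of the $(d+1)$ variables, and close the $x_d$ direction — the only one without dispersion — by Bernstein's inequality in $x_d$. Concretely, the factor with Fourier support in $|\xi_d|\sim N_{\min}'$ obeys $\|\cdot\|_{L^\infty_{x_d}}\lec N_{\min}'^{1/2}\|\cdot\|_{L^2_{x_d}}$, or $N_{\min}'^{1/2+}\|\cdot\|_{L^2_{x_d}}$ via the softer Sobolev embedding $H^{1/2+}_{x_d}\hookrightarrow L^\infty_{x_d}$ (which accounts for the ``$+$'' in (i) and (ii)), so that one of the three factors may be traded from $L^2_{x_d}$ to $L^\infty_{x_d}$ at a cost of exactly $N_{\min}'^{1/2(+)}$. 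The only remaining differences among the three estimates lie in how $u_{N_2,N_2'}$ and $v_{N,N'}$ are controlled.

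For (i), Remark~\ref{loc_smoothing_U^2} together with the $x_1$-localization of $v_{N,N'}$ yields $\|v_{N,N'}\|_{L^\infty_{x_1}L^2_{x_2,\dots,x_d,t}}\lec N^{-1/2}\|v_{N,N'}\|_{U^2_S}$, which is the sole source of the $N^{-1/2}$ in the bound. The mixed-norm H\"older then places $n$ in $L^\infty_tL^2_x$, $u$ in its given norm $L^2_{x_1}L^\infty_{x_2,\dots,x_{d-1},t}L^2_{x_d}$, and $v$ in the above local-smoothing norm. It closes to $L^1$ in each of $x_1,\dots,x_{d-1}$ directly, and in $x_d$ after the Bernstein step; in $t$ the triple product lies in $L^\infty\cdot L^\infty\cdot L^2=L^2_t$, so Cauchy--Schwarz against $\1_{[0,T]}$ produces the factor $T^{1/2}$.

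For (ii), Remark~\ref{Strichartz_V^2} puts both $u_{N_2,N_2'}$ and $v_{N,N'}$ in the Strichartz space $L^4_tL^{2(d-1)/(d-2)}_{\tilde x}L^2_{x_d}$. H\"older in $\tilde x$ then demands $n_{N_1,N_1'}\in L^{d-1}_{\tilde x}$, which by Bernstein in the $(d-1)$-dimensional variable $\tilde x$ costs precisely $N_1^{(d-1)(1/2-1/(d-1))}=N_1^{(d-3)/2}$; in $t$ the product is in $L^\infty\cdot L^4\cdot L^4=L^2_t$, again producing $T^{1/2}$. For (iii) with $d=2$, $\tilde x=x_1$ is one-dimensional, so no $\tilde x$-Bernstein is needed: use the endpoint Strichartz $V^2_S\hookrightarrow L^4_tL^\infty_{x_1}L^2_{x_2}$ on $u_{N_2,N_2'}$ and the trivial embedding $V^2_S\hookrightarrow L^\infty_tL^2_x$ on $v_{N,N'}$. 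The $x_1$-H\"older is immediate, the $x_2$-H\"older closes by the Bernstein step, and in time the product lies in $L^\infty\cdot L^4\cdot L^\infty=L^4_t$, whence $\|\1_{[0,T]}\|_{L^{4/3}_t}=T^{3/4}$.

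The only real subtlety is bookkeeping: the $x_d$-Bernstein loss $N_{\min}'^{1/2(+)}$ has to be assigned to the factor with \emph{smallest} $x_d$-frequency so that, in the dyadic summation of the subsequent Proposition~\ref{BE}, the resulting weights remain square-summable; and in (ii) the $\tilde x$-Bernstein loss $N_1^{(d-3)/2}$ must be placed on the wave factor $n_{N_1,N_1'}$ rather than on the Schr\"odinger factors $u,v$, which are already consumed by the Strichartz norm.
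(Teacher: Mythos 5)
Your proposal is correct and follows essentially the same route as the paper's proof: H\"older in the mixed norms, the Sobolev/Bernstein embedding in the non-dispersive $x_d$ direction to produce $N_{\min}'^{1/2+}$, Remark~\ref{loc_smoothing_U^2} for the $N^{-1/2}$ gain in (i), and Remark~\ref{Strichartz_V^2} for (ii) and (iii). The only cosmetic differences are that in (iii) you place the Strichartz norm on $u$ and the $L^{\infty}_t L^2_x$ bound on $v$ (the paper does the reverse), and you make explicit the bookkeeping (assignment of the $x_d$-Bernstein loss to the lowest frequency and of the $\tilde{x}$-Bernstein loss to $n$) that the paper leaves implicit.
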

\begin{proof}
We only show $N_2' = \min\{ N',\, N_1',\, N_2' \}$ since the other cases are treated similarly. 
$\rm(\hspace{.18em}i\hspace{.18em})$ 
Without loss of generality, we assume $|\xi_1| = \max_{1 \le i \le d-1} |\xi_i|$ where $\xi_i$ denotes the $i$-th component of $\xi$. 
By the H\"older inequality and the Sobolev inequality,   
we have 
\EQS{
 (L.H.S.\, of\, \eqref{O_11}) 
 \lec T^{1/2} N_2'^{1/2+} \|n_{N_1,\, N_1'}\|_{L^{\infty}_t L^2_x} \|u_{N_2,\, N_2'}\|_{L^2_{x_1} L^{\infty}_{x_2, ... , x_{d-1}, t} L^2_{x_d}} \|v_{N,\, N'}\|_{L^{\infty}_{x_1} L^2}.  
            \label{estimate_i}
}
Then Remark \ref{loc_smoothing_U^2} leads the desired result. 
$\rm(\hspace{.08em}ii\hspace{.08em})$ follows from the H\"older inequality, the Sobolev inequality and 
Remark \ref{Strichartz_V^2}. Indeed, 
\EQQS{ 
 &(L.H.S.\, of\, \eqref{3dim_1}) \\ 
 &\lec \|n_{N_1,\, N_1'}\|_{L^2_t L^{d-1}_{\tilde{x}} L^2_{x_d}} \|u_{N_2,\, N_2'}\|_{L^4_t L^{2(d-1)/(d-2)}_{\tilde{x}} L^{\infty}_{x_d}} \|v_{N,\, N'}\|_{L^4_t L^{2(d-1)/(d-2)}_{\tilde{x}} L^2_{x_d}} \\ 
 &\lec T^{1/2} N_1^{(d-3)/2+} N_2'^{1/2+} \|n_{N_1,\, N_1'}\|_{L^{\infty}_t L^2_x} \|u_{N_2,\, N_2'}\|_{V^2_S} \|v_{N_2,\, N_2'}\|_{V^2_S}.   
}
By the H\"older inequality, the Sobolev inequality and 
Remark \ref{Strichartz_V^2}, we have 
\EQQS{
 (L.H.S.\, of\, \eqref{2d_1}) 
 &\lec T^{3/4} \|n_{N_1,\, N_1'}\|_{L^{\infty}_t L^2_x} \|u_{N_2,\, N_2'}\|_{L^{\infty}_t L^2_{\tilde{x}} L^{\infty}_{x_d}} \|v_{N,\, N'}\|_{L^4_t L^{\infty}_{\tilde{x}} L^2_{x_d}} \\ 
 &\lec T^{3/4} N_2'^{1/2+} \|n_{N_1,\, N_1'}\|_{L^{\infty}_t L^2_x} \|u_{N_2,\, N_2'}\|_{L^{\infty}_t L^2_x} \|v_{N,\, N'}\|_{V^2_S}. 
}
Then from $V^2_S \hookrightarrow L^{\infty}_t L^2_x$, $\rm(\hspace{.08em}iii\hspace{.08em})$ follows. 
\end{proof}

\section{Nonlinear estimates} 
In this section, we derive the nonlinear estimate for the Duhamel term. 
For the linear part, we see from Proposition \ref{local_smoothing} and Proposition \ref{maximal_function} that   
\EQQS{
 &\|P_0 S(t)u_0\|_{V^2_S \, \cap \, L^{\infty}_{x_i} L^2_{x_1, ... , x_{i-1}, x_{i+1}, ... , x_d, t} \, \cap \, L^2_{x_i} L^{\infty}_{x_1, ... , x_{i-1}, x_{i+1}, ... , x_{d-1}, t} L^2_{x_d}} \lec \|P_0 u_0\|_2, \\ 
 &\Bigl( \sum_{N,\, N' \ge 1} N^{2s_k} N'^{2s'} \|P_{N,\, N'} S(t)u_0 \|_{V^2_S}^2  \Bigr)^{\frac{1}{2}} \lec \|u_0\|_{H^{s_k,\, s'}_x}, \\  
 &\Bigl( \sum_{N,\, N' \ge 1} N^{2(s_k + 1/2)} N'^{2s'} \| P_{N,\, N'} S(t)u_0 \|_{L^{\infty}_{x_i} L^2_{x_1, ... , x_{i-1}, x_{i+1}, ... , x_d, t}}^2 \Bigr)^{\frac{1}{2}} 
 \lec \|u_0\|_{H^{s_k,\, s'}_x}, \\ 
 &\Bigl( \sum_{N,\, N' \ge 1} N^{2(s_k - (d-1)/2 - \varepsilon)} N'^{2s'} \|P_{N,\, N'} S(t)u_0 \|_{L^2_{x_i} L^{\infty}_{x_1, ... , x_{i-1}, x_{i+1}, ... , x_{d-1}, t} L^2_{x_d}}^2 \Bigr)^{\frac{1}{2}} 
 \lec \|u_0\|_{H^{s_k,\, s'}_x}, 
}
for $i=1, ... , d-1$. Moreover, 
\EQQS{
 &\|P_0 W_{\pm}(t) n_{\pm 0}\|_{U^2_{W_{\pm}}} 
 + \Bigl( \sum_{N_1,\, N_1' \ge 1} N_1^{2s_l} N_1'^{2s'} \|P_{N_1,\, N_1'} W_{\pm}(t) n_{\pm 0}\|_{U^2_{W_{\pm}}}^2 \Bigr)^{\frac{1}{2}} \\ 
 &\lec \|P_0 n_{\pm 0}\|_2 + \Bigl( \sum_{N_1,\, N_1' \ge 1} N_1^{2s_l} N_1'^{2s'} \|P_{N_1,\, N_1'} n_{\pm 0} \|_2^2 \Bigr)^{\frac{1}{2}}  
 \lec \|n_{\pm 0}\|_{H^{s_l,\, s'}_x}. 
}

Now, we state and prove the nonlinear estimate. 
\begin{prop}  \label{BE}
Let $d \ge 2, s_k > (d-1)/2, s_l > (d-2)/2, s_k - s_l = 1/2, s' > 1/2, u_0 \in H^{s_k,\, s'}, n_{\pm 0} \in H^{s_l,\, s'}, \omega = (-\Delta_{\perp})^{1/2}$ and $0 < T < 1$. 
Then, it holds that  
\EQS{
 &\Bigl\| \int_0^t S(t-t') n(t')u(t')\, dt' \Bigr\|_{X_S}   
      \lec T^{1/2} \|n\|_{Z^{s_l,\, s'}_{W_{\pm}}} \|u\|_{X_S}, 
                          \label{BES} \\
 &\Bigl\| \int_0^t W_{\pm}(t-t') \omega (\bar{u}v)(t')\, dt' \Bigr\|_{Z^{s_l,\, s'}_{W_{\pm}}}   
      \lec T^{1/2} \|u\|_{X_S} \|v\|_{X_S}. 
                          \label{BEW}
} 
\end{prop}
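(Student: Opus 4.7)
\medskip

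\noindent\textbf{Proof proposal.} The plan is to estimate the three pieces $J, K, M$ of $\|\cdot\|_{X_S}$ and the single piece $R$ of $\|\cdot\|_{Z^{s_l,\, s'}_{W_{\pm}}}$ separately after a Littlewood--Paley decomposition. Writing $n = \sum_{N_1, N_1'} P_{N_1, N_1'} n$, $u = \sum_{N_2, N_2'} P_{N_2, N_2'} u$ and testing against $P_{N,\, N'} v$, each norm is reduced to a trilinear sum in the dyadic parameters $(N_1, N_1'), (N_2, N_2'), (N, N')$. The support condition $\xi = \xi_1 + \xi_2$ forces (at most) one of the standard three frequency configurations: $N \sim N_2 \gg N_1$, $N \sim N_1 \gg N_2$, or $N_1 \sim N_2 \gtrsim N$, and similarly for the primed frequencies. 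A parallel splitting applies in the proof of \eqref{BEW}, where one tests against $P_{N_1, N_1'} w$ with $w \in V^2_{W_\pm}$.

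First I would handle $J$ (and the $R$ part of \eqref{BEW}) by duality of $U^2$--$V^2$ (see \cite{HHK}):
\EQQS{
 \Bigl\| \int_0^t S(t-t') (nu)(t')\, dt' \Bigr\|_{V^2_S} \lec \sup_{\|v\|_{Y^{s_k,\, s'}_S}\le 1} \Bigl| \int_{[0,T]\times \mathbb{R}^d} n\, u\, \overline{v}\, dxdt \Bigr|,
}
so the entire task becomes bounding the trilinear form $\int \mathbf{1}_{[0,T]} n_{N_1,N_1'} u_{N_2,N_2'} \overline{v_{N,N'}}\, dxdt$. In the non-resonant high-low regimes $N \sim N_2 \gg N_1$ and $N_1 \sim N_2 \gg N$ I would use \eqref{O_11}, placing the factor with the largest spatial frequency (needing the $D_{x_i}^{1/2}$ loss to be absorbed) in $L^{\infty}_{x_i} L^2$ via Remark \ref{loc_smoothing_U^2}, and the low-frequency factor in $L^2_{x_i} L^{\infty}_{...,\, t} L^2_{x_d}$ via Proposition \ref{inhomo_maximal_fcn}; the $s_k - s_l = 1/2$ balance exactly lets the $N^{-1/2}$ gain from local smoothing compensate the regularity discrepancy. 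In the high-high resonance $N_1 \sim N_2 \gtrsim N$, which is the main obstacle and corresponds to the terms labelled $J_{1,1}$ and $J_{4,3,1}$ in the introduction, I would split according to whether $|\Xi_1|$ is small (then \eqref{3dim_1}/\eqref{2d_1} together with a Sobolev loss in $x_d$ (absorbed by $s' > 1/2$) suffices) or $|\Xi_1|$ is large, in which case the modulation estimate \eqref{modulation_est} lets one of the three functions be restricted to a high-modulation region and absorbed into $V^2$ with the help of the $U^2_S$-atom decomposition. The summation in $N_1', N_2', N'$ converges thanks to the factor $N_{\min}'^{1/2+}\LR{N'}^{-s'}$ with $s' > 1/2$.

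Next I would handle $K$ and $M$ directly via the inhomogeneous smoothing/maximal estimates, Propositions \ref{inhomo_local_smoothing} and \ref{inhomo_maximal_fcn}. After reducing to $L^1_{x_i} L^2$ on the right-hand side, I would apply H\"older with one factor in $L^2_{x_i} L^{\infty}_{...,\, t} L^2_{x_d}$ (the $M$-norm of $u$ or, for \eqref{BEW}, of the other Schr\"odinger input) and the other in $L^{2}_{x_i} L^2_{...,\, t} L^{\infty}_{x_d}$ estimated by the $U^2_{W_{\pm}}$-norm of $n$ together with a one-dimensional Sobolev embedding in $x_d$, again paid for by $s' > 1/2$. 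The $\varepsilon$-loss in the definition of $M$ is reserved precisely to absorb the Sobolev loss coming from \eqref{maximal2}.

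Finally, the wave estimate \eqref{BEW} follows by the same template: duality of $U^2_{W_{\pm}}$--$V^2_{W_{\pm}}$ reduces matters to the same trilinear integral $\int \omega(\bar{u}v)\, \overline{w}\, dxdt$, and the roles of $n$ and one of $u,v$ are interchanged; the extra $\omega$ is balanced by the factor $N_1$ absorbed into $N_1^{s_l}$ with the shift $s_k - s_l = 1/2$. I expect the genuine obstacle to be the resonant high-high-to-high interaction, because the modulation estimate \eqref{modulation_est} degenerates there, so the argument must rely on the bilinear Strichartz estimate of Proposition \ref{x01/2} (together with the smallness factor $T^{1/2}$ which comes from integrating the cutoff $\mathbf{1}_{[0,T]}$ in a spare $L^2_t$).
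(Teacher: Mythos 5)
Your overall architecture --- Littlewood--Paley decomposition, reduction of the $V^2_S$ and $U^2_{W_\pm}$ norms to a trilinear form by duality, separate treatment of the components $J,K,M,R$, and the use of \eqref{O_11} (local smoothing on the output, maximal function space on the low--frequency Schr\"odinger input, $L^\infty_t L^2_x$ on the wave factor) with the balance $s_k-s_l=1/2$ --- is the same as the paper's. The reduction of $K$ and $M$ to $L^1_{x_i}L^2$ via Propositions \ref{inhomo_local_smoothing} and \ref{inhomo_maximal_fcn}, with the $\varepsilon$-loss in $M$ absorbing the $N^s$ from \eqref{maximal2}, also matches.

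The genuine gap is your treatment of the remaining interaction. First, you misplace the resonance: $J_{1,1}$ is the regime $N_1\sim N\gec N_2$ (high \emph{wave} frequency), not $N_1\sim N_2\gec N$, and it is exactly the case that \eqref{O_11} is designed for. Second, and more seriously, for the hard part you propose a dichotomy in $|\Xi_1|$, invoking the modulation estimate \eqref{modulation_est} in the high-modulation region and the bilinear Strichartz estimate Proposition \ref{x01/2} as the essential tool. The paper states explicitly (end of Section 1) that $J_{1,1}$ contains the resonant set on which \eqref{modulation_est} cannot be applied, and that Proposition \ref{x01/2} does not gain enough regularity; indeed Proposition \ref{x01/2} is proved in the appendix only ``for future reference'' and is never used in the proof of Proposition \ref{BE}. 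The regime with low wave frequency ($N_1\ll N_2\sim N$, i.e.\ $J_3$, $J_{4,3}$, $M_3$) is handled in the paper with no modulation analysis at all: one uses Lemma \ref{trilinear}\,(ii)/(iii), i.e.\ the linear Strichartz embedding $V^2_S\hookrightarrow U^4_S\hookrightarrow L^4_tL^{2(d-1)/(d-2)}_{\tilde x}L^2_{x_d}$ on \emph{both} Schr\"odinger factors, and the resulting factor $N_1^{(d-3)/2+}$ is summed against $N_1^{-s_l}$ using $s_l>(d-2)/2$. Note also that your alternative of applying \eqref{O_11} in the regime $N\sim N_2\gg N_1$ does not close: placing the high-frequency factor $u_{N_2}$ in the maximal-function space costs $N_2^{-(s_k-(d-1)/2-\varepsilon)}$, and after the $N^{-1/2}$ gain one is left with an uncancelled positive power $N^{(d-2)/2+\varepsilon}$, with no decay available from the wave frequency. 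So the proposal is missing the actual mechanism (the $V^2_S\to L^4_t$ Strichartz embedding paid for by $s_l>(d-2)/2$) and replaces it with tools that fail precisely where they would be needed.
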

\begin{proof}
To prove \eqref{BES} we estimate the high and the low frequency part of the Duhamel term respectively. 
We firstly estimate the high frequency part of $J$, namely 
\EQQS{ 
 \Bigl( \sum_{N,\, N' \ge 1} N^{2s_k} N'^{2s'} \Bigl\| P_{N,\, N'} \int_0^t S(t-t') n(t')u(t')\, dt' \Bigr\|_{V^2_S}^2 \Bigr)^{1/2}. 
}  
For brevity, we denote $P_{N,\, N'} f = f_{N,\, N'}$.   
We set $J_i,\, J_{4,\, i}, i=1, 2, 3$ where 
\EQQS{
 &J_1 := \Bigl( \sum_{N,\, N' \ge 1} N^{2s_k} N'^{2s'} \sup_{\|v\|_{U^2_S}=1} \Bigl| \sum_{N_2 \lec N} \sum_{N_1 \sim N} \int_{\mathbb{R}^{d+1}} \1_{[0,\, T]} n_{N_1} u_{N_2} \overline{v_{N,\, N'}}\, dxdt \Bigr|^2 \Bigr)^{1/2}, \\ 
 &J_2 := \Bigl( \sum_{N,\, N' \ge 1} N^{2s_k} N'^{2s'} \sup_{\|v\|_{U^2_S}=1} \Bigl| \sum_{N_2 \gg N} \sum_{N_1 \sim N_2} \int_{\mathbb{R}^{d+1}} \1_{[0,\, T]} n_{N_1} u_{N_2} \overline{v_{N,\, N'}}\, dxdt \Bigr|^2 \Bigr)^{1/2}, \\ 
 &J_3 := \Bigl( \sum_{N,\, N' \ge 1} N^{2s_k} N'^{2s'} \sup_{\|v\|_{U^2_S}=1} \Bigl| \sum_{N_2 \sim N} \sum_{N_1 \ll N_2} \int_{\mathbb{R}^{d+1}} \1_{[0,\, T]} n_{N_1} u_{N_2} \overline{v_{N,\, N'}}\, dxdt \Bigr|^2 \Bigr)^{1/2}, \\ 
 &J_{4,\, 1} := \Bigl( \sum_{N,\, N' \ge 1} N^{2s_k} N'^{2s'} \Bigl\| P_{N,\, N'} \int_0^t S(t-t') \sum_{N_2 \lec N} \sum_{N_1 \sim N} n_{N_1} u_{N_2}\, dt' \Bigr\|_{L^{\infty}_t L^2_x}^2 \Bigr)^{1/2}, \\ 
 &J_{4,\, 2} := \Bigl( \sum_{N,\, N' \ge 1} N^{2s_k} N'^{2s'} \Bigl\| P_{N,\, N'} \int_0^t S(t-t') \sum_{N_2 \gg N} \sum_{N_1 \sim N_2} n_{N_1} u_{N_2}\, dt' \Bigr\|_{L^{\infty}_t L^2_x}^2 \Bigr)^{1/2}, \\ 
 &J_{4,\, 3} := \Bigl( \sum_{N,\, N' \ge 1} N^{2s_k} N'^{2s'} \Bigl\| P_{N,\, N'} \int_0^t S(t-t') \sum_{N_2 \sim N} \sum_{N_1 \ll N_2} n_{N_1} u_{N_2}\, dt' \Bigr\|_{L^{\infty}_t L^2_x}^2 \Bigr)^{1/2}. 
}
Then from \cite{KaT} Corollary 2.10 
\EQQS{
 \Bigl( \sum_{N,\, N' \ge 1} N^{2s_k} N'^{2s'} \Bigl\| P_{N,\, N'} \int_0^t S(t-t') n(t')u(t')\, dt' \Bigr\|_{V^2_S}^2 \Bigr)^{1/2} \lec \sum_{i=1}^3 (J_i + J_{4,\, i}).  
}
Let us separate $J_1$ into the following three parts. 
\EQQS{
 J_{1,\, 1} &:= \Bigl( \sum_{N,\, N' \ge 1} N^{2s_k} N'^{2s'} \sup_{\|v\|_{U^2_S}=1} \Bigl| \sum_{N_2 \lec N,\, N_2' \lec N'} \sum_{N_1 \sim N,\, N_1' \sim N'} \\  
 &\hspace{15em} \int_{\mathbb{R}^{d+1}} \1_{[0,\, T]} n_{N_1,\, N_1'} u_{N_2,\, N_2'} \overline{v_{N,\, N'}}\, dxdt \Bigr|^2 \Bigr)^{1/2}, \\ 
 J_{1,\, 2} &:= \Bigl( \sum_{N,\, N' \ge 1} N^{2s_k} N'^{2s'} \sup_{\|v\|_{U^2_S}=1} \Bigl| \sum_{N_2 \lec N,\, N_2' \gg N'} \sum_{N_1 \sim N,\, N_1' \sim N_2'} \\  
 &\hspace{15em} \int_{\mathbb{R}^{d+1}} \1_{[0,\, T]} n_{N_1,\, N_1'} u_{N_2,\, N_2'} \overline{v_{N,\, N'}}\, dxdt \Bigr|^2 \Bigr)^{1/2}, \\ 
 J_{1,\, 3} &:= \Bigl( \sum_{N,\, N' \ge 1} N^{2s_k} N'^{2s'} \sup_{\|v\|_{U^2_S}=1} \Bigl| \sum_{N_2 \lec N,\, N_2' \sim N'} \sum_{N_1 \sim N,\, N_1' \ll N_2'} \\ 
 &\hspace{15em} \int_{\mathbb{R}^{d+1}} \1_{[0,\, T]} n_{N_1,\, N_1'} u_{N_2,\, N_2'} \overline{v_{N,\, N'}}\, dxdt \Bigr|^2 \Bigr)^{1/2}. 
} 
Similarly, we have $J_i \lec \sum_{l=1}^3 J_{i,\, l},\, J_{4,\, k} \lec \sum_{l=1}^3 J_{4,\, k,\, l}, (i=2, 3, k=1, 2, 3)$.  
For brevity, we only consider the case $N_2' \lec N' \sim N_1$, namely $J_{1,\, 1}$  since the other cases are treated similarly from $L^{\infty}(\mathbb{R}) \hookrightarrow H^{s'}(\mathbb{R})$ with $s' > 1/2$. 
Hereafter we consider 
$|\xi_1| = \max_{1 \le i \le d-1} |\xi_i|$ for brevity. 
Now we estimate $J_{1,\, 1}$. 
From Lemma \ref{trilinear} \eqref{O_11}, $s_k - s_l = 1/2, s_k - (d-1)/2 - \varepsilon > 0$, the Cauchy-Schwarz inequality and $\|n\|_{L^{\infty}_t H^{s_l,\, s'}_x} \lec R$, we have   
\EQQS{ 
 J_{1,\, 1} &\lec T^{1/2} \Bigl( \sum_{N,\, N' \ge 1} N^{2s_k-1} N'^{2s'} \Bigl( \sum_{N_1 \sim N,\, N_1' \sim N'} \|n_{N_1,\, N_1'}\|_{L^{\infty}_t L^2_x} \Bigl( \|P_0 u\|_{L^2_{x_1} L^{\infty}_{x_2, ... , x_{d-1}, t} L^2_{x_d}} \\ 
 &\qquad + \sum_{1 \le N_2 \lec N,\, 1 \le N_2' \lec N'} N_2'^{1/2+} \|u_{N_2,\, N_2'}\|_{L^2_{x_1} L^{\infty}_{x_2, ... , x_{d-1}, t} L^2_{x_d}} \Bigr)  \Bigr)^2 \Bigr)^{1/2} \\  
 &\lec T^{1/2} \Bigl( \sum_{N_1,\, N_1' \gec 1} N_1^{2s_l} N_1'^{2s'} \|n_{N_1,\, N_1'}\|_{L^{\infty}_t L^2_x}^2 \Bigl( \|P_0 u\|_{L^2_{x_1} L^{\infty}_{x_2, ... , x_{d-1}, t} L^2_{x_d}}^2 \\ 
 &\qquad + \sum_{1 \le N_2 \lec N_1,\, 1 \le N_2' \lec N_1'} N_2^{2(s_k-(d-1)/2-\varepsilon)} N_2'^{2s'} \|u_{N_2,\, N_2'}\|_{L^2_{x_1} L^{\infty}_{x_2, ... , x_{d-1}, t} L^2_{x_d}}^2 \Bigr)      
  \Bigr)^{1/2} \\ 
 &\lec T^{1/2} M \|n\|_{L^{\infty}_t H^{s_l,\, s'}_x}  
 \lec T^{1/2} MR.   
}  
Let us estimate $J_{2,\, 1}$. 
From Lemma \ref{trilinear} \eqref{O_11}, $s_k - s_l = 1/2$, the Cauchy-Schwarz inequality, $l^1 l^2 \hookrightarrow l^2 l^1$ and $s_k - (d-1)/2 - \varepsilon > 0$, we have 
\EQQS{
 J_{2,\, 1} &= \Bigl( \sum_{N,\, N' \ge 1} N^{2s_k} N'^{2s'} \sup_{\|v\|_{U^2_S}=1} \Bigl| \sum_{N_2 \gg N,\, N_2' \lec N'} \sum_{N_1 \sim N_2,\, N_1' \sim N'} \\
 &\qquad \int_{\mathbb{R}^{d+1}} \1_{[0,\, T]} n_{N_1,\, N_1'} u_{N_2,\, N_2'} \overline{v_{N,\, N'}}\, dxdt \Bigr|^2 \Bigr)^{1/2} \\  
 &\lec T^{1/2} \Bigl( \sum_{N,\, N' \ge 1} N^{2s_k-1} N'^{2s'} \Bigl( \sum_{N_2 \gg N,\, N_2' \lec N'} \sum_{N_1 \sim N_2,\, N_1' \sim N'} N_2'^{1/2+} \|n_{N_1,\, N_1'}\|_{L^{\infty}_t L^2_x} \\ 
 &\qquad \|u_{N_2,\, N_2'}\|_{L^2_{x_1} L^{\infty}_{x_2, ... , x_{d-1}, t} L^2_{x_d}}  \Bigr)^2 \Bigr)^{1/2} \\  
 &\lec T^{1/2} \sum_{N_2 \gg 1} \sum_{N_1 \sim N_2} \Bigl( N_1^{2s_l} \|n_{N_1}\|_{L^{\infty}_t L^2_{\tilde{x}} H^{s'}_{x_d}}^2 \|u_{N_2}\|_{L^2_{x_1} L^{\infty}_{x_2, ... , x_{d-1}, t} H^{s'}_{x_d}}^2 \Bigr)^{1/2} \\ 
 &\lec T^{1/2} \Bigl( \sum_{N_2 \gg 1} \|u_{N_2}\|_{L^2_{x_1} L^{\infty}_{x_2, ... , x_{d-1}, t} H^{s'}_{x_d}}^2 \Bigr)^{1/2} \|n\|_{L^{\infty}_t H^{s_l,\, s'}_x}  
 \lec T^{1/2} MR. 
}  
Let us estimate $J_{3,\, 1}$. 
When $d \ge 3$, from Lemma \ref{trilinear} \eqref{3dim_1}, $U^2_S \hookrightarrow V^2_S$, $s_l > (d-2)/2$ and the Cauchy-Schwarz inequality we have 
\EQQS{
 J_{3,\, 1} 
 &= \Bigl( \sum_{N,\, N' \ge 1} N^{2s_k} N'^{2s'} \sup_{\|v\|_{U^2_S}=1} \Bigl| \sum_{N_2 \sim N,\, N_2' \lec N'} \sum_{N_1 \ll N_2,\, N_1' \sim N'} \\ 
 &\qquad \int_{\mathbb{R}^{d+1}} \1_{[0,\, T]} n_{N_1,\, N_1'} u_{N_2,\, N_2'} \overline{v_{N,\, N'}}\, dxdt \Bigr|^2 \Bigr)^{1/2} \\
 &\lec T^{1/2} \Bigl( \sum_{N,\, N' \ge 1} N^{2s_k} N'^{2s'} \Bigl( \sum_{N_2 \sim N,\, N_2' \lec N'} \sum_{N_1 \ll N_2,\, N_1' \sim N'} N_1^{(d-3)/2+} N_2'^{1/2+} \\ 
 &\qquad \|n_{N_1,\, N_1'}\|_{L^{\infty}_t L^2_x}  \|u_{N_2,\, N_2'}\|_{V^2_S} \Bigr)^2 \Bigr)^{1/2} \\  
 &\lec T^{1/2} \|u\|_{Y^{s_k,\, s'}_S} \|n\|_{L^{\infty}_t H^{s_l,\, s'}_x} 
 \lec T^{1/2} JR.   
}
For $d = 2$, by Lemma \ref{trilinear} \eqref{2d_1} we have 
\EQQS{
 J_{3,\, 1} &\lec T^{3/4} \Bigl( \sum_{N,\, N' \ge 1} N^{2s_k} N'^{2s'} \Bigl( \sum_{N_2 \sim N,\, N_2' \lec N'} \sum_{N_1 \ll N_2,\, N_1' \sim N'} N_2'^{1/2+} \\ 
 &\qquad \|n_{N_1,\, N_1'}\|_{L^{\infty}_t L^2_x} \|u_{N_2,\, N_2'}\|_{V^2_S} \Bigr)^2 \Bigr)^{1/2} \\ 
 &\lec T^{3/4} \|u\|_{Y^{s_k,\, s'}_S} \|n\|_{L^{\infty}_t H^{s_l,\, s'}_x} 
 \lec T^{3/4} JR. 
} 
We estimate $J_{4,\, 1,\, 1}$ below. 
From $|\xi_1| = \max_{1 \le i \le d-1} |\xi_i|$, Proposition \ref{inhomo_L^infty_tL^2_x} \eqref{conti}, $s_k - s_l = 1/2$, the H\"older inequality, $s_k - (d-1)/2 - \varepsilon > 0$ and the Cauchy-Schwarz inequality  
\EQS{
 J_{4,\, 1,\, 1} 
 &= \Bigl( \sum_{N,\, N' \ge 1} N^{2s_k} N'^{2s'} \Bigl\| P_{N,\, N'} \int_0^t S(t-t') \sum_{N_2 \lec N,\, N_2' \lec N'} \sum_{N_1 \sim N,\, N_1' \sim N'} \notag \\ 
 &\qquad n_{N_1,\, N_1'} u_{N_2,\, N_2'}\, dt' \Bigr\|_{L^{\infty}_t L^2_x}^2 \Bigr)^{1/2} \notag \\ 
 &\lec \Bigl( \sum_{N,\, N' \ge 1} N^{2s_k-1} N'^{2s'} \Bigl\| P_{N,\, N'} \Bigl( \sum_{N_2 \lec N,\, N_2' \lec N'} \sum_{N_1 \sim N,\, N_1' \sim N'} \notag \\ 
 &\qquad n_{N_1,\, N_1'} u_{N_2,\, N_2'}\Bigr) \Bigr\|_{L^1_{x_1} L^2_{x_2, ... , x_d, t}}^2 \Bigr)^{1/2} 
                    \label{O_411} \\ 
 &\lec \Bigl( \sum_{N_1,\, N_1' \gec 1} T N_1^{2s_l} N_1'^{2s'} \Bigl\| \sum_{N_2 \lec N_1,\, N_2' \lec N_1'} u_{N_2,\, N_2'} \Bigr\|_{L^2_{x_1} L^{\infty}_{x_2, ... , x_d, t}}^2 \|n_{N_1,\, N_1'}\|_{L^{\infty}_t L^2_x}^2 \Bigr)^{1/2} \notag \\     
 &\lec T^{1/2} M \|n\|_{L^{\infty}_t H^{s_l,\, s'}_x} 
 \lec T^{1/2} MR. \notag 
}   
We estimate $J_{4,\, 2,\, 1}$. 
From $|\xi_1| = \max_{1 \le i \le d-1} |\xi_i|$, Proposition \ref{inhomo_L^infty_tL^2_x} \eqref{conti}, $s_k - s_l = 1/2$, the H\"older inequality and the Sobolev inequality   
\EQS{
 J_{4,\, 2,\, 1} 
 &= \Bigl( \sum_{N,\, N' \ge 1} N^{2s_k} N'^{2s'} \Bigl\| P_{N,\, N'} \int_0^t S(t-t') \sum_{N_2 \gg N,\, N_2' \lec N'} \sum_{N_1 \sim N_2,\, N_1' \sim N'} \notag \\
 &\qquad n_{N_1,\, N_1'} u_{N_2,\, N_2'}\, dt' \Bigr\|_{L^{\infty}_t L^2_x}^2 \Bigr)^{1/2} \notag \\ 
 &\lec \Bigl( \sum_{N,\, N' \ge 1} N^{2s_k-1} N'^{2s'} \Bigl\| \sum_{N_2 \gg N,\, N_2' \lec N'} \sum_{N_1 \sim N_2,\, N_1' \sim N'} n_{N_1,\, N_1'} u_{N_2,\, N_2'} \Bigr\|_{L^1_{x_1} L^2}^2 \Bigr)^{1/2} 
                                 \label{O_421} \\ 
 &\lec T^{1/2} \Bigl( \sum_{N_1 \gg 1,\, N_1' \gec 1} N_1^{2s_l} N_1'^{2s'} \Bigl\| \sum_{N_2 \sim N_1,\, N_2' \lec N_1'} u_{N_2,\, N_2'} \Bigr\|_{L^2_{x_1} L^{\infty}}^2 \|n_{N_1,\, N_1'}\|_{L^{\infty}_t L^2_x}^2 \Bigr)^{1/2} \notag \\ 
 &\lec T^{1/2} M \|n\|_{L^{\infty}_t H^{s_l,\, s'}_x}  
 \lec T^{1/2} MR.     \notag 
}   
Now we estimate $J_{4,\, 3,\, 1}$. 
By $U^2_S \hookrightarrow L^{\infty}_t L^2_x$, we have 
\EQS{
 J_{4,\, 3,\, 1} 
 &= \Bigl( \sum_{N,\, N' \ge 1} N^{2s_k} N'^{2s'} \Bigl\| P_{N,\, N'} \int_0^t S(t-t') \sum_{N_2 \sim N,\, N_2' \lec N'} \sum_{N_1 \ll N_2,\, N_1' \sim N'} \notag \\
 &\qquad n_{N_1,\, N_1'} u_{N_2,\, N_2'}\, dt' \Bigr\|_{L^{\infty}_t L^2_x}^2 \Bigr)^{1/2} \notag \\ 
 &\lec \Bigl( \sum_{N,\, N' \ge 1} N^{2s_k} N'^{2s'} \Bigl\| P_{N,\, N'} \int_0^t S(t-t') \sum_{N_2 \sim N,\, N_2' \lec N'} \sum_{N_1 \ll N_2,\, N_1' \sim N'} \notag \\ 
 &\qquad n_{N_1,\, N_1'} u_{N_2,\, N_2'}\, dt' \Bigr\|_{U^2_S}^2 \Bigr)^{1/2} \notag \\ 
 &\lec \Bigl( \sum_{N,\, N' \ge 1} N^{2s_k} N'^{2s'} \sup_{\|v\|_{V^2_S}=1} \Bigl|  \sum_{N_2 \sim N,\, N_2' \lec N'} \sum_{N_1 \ll N_2,\, N_1' \sim N'} \notag \\ 
 &\qquad \int_{\mathbb{R}^{d+1}} \1_{[0,\, T]} n_{N_1,\, N_1'} u_{N_2,\, N_2'} \overline{v_{N,\, N'}} \, dxdt \Bigr|^2 \Bigr)^{1/2}. 
                   \label{O_431} 
} 
For $d \ge 3$, we apply Lemma \ref{trilinear} \eqref{3dim_1}, $s_l > (d-2)/2$ and the Cauchy-Schwarz inequality, then we have   
\EQQS{ 
 (R.H.S.\, of\, \eqref{O_431}) 
 &\lec \Bigl( \sum_{N,\, N' \ge 1} N^{2s_k} N'^{2s'} \Bigl( \sum_{N_2 \sim N,\, N_2' \lec N'} \sum_{N_1 \ll N_2,\, N_1' \sim N'} T^{1/2} \\   
 &\qquad N_1^{(d-3)/2+} N_2'^{1/2+} \|n_{N_1,\, N_1'}\|_{L^{\infty}_t L^2_x} \|u_{N_2,\, N_2'}\|_{V^2_S} \Bigr)^2 \Bigr)^{1/2} \\  
 &\lec T^{1/2} \|u\|_{Y^{s_k,\, s'}_S} \|n\|_{L^{\infty}_t H^{s_l,\, s'}_x}  
 \lec T^{1/2} JR.   
}
For $d=2$, by Lemma \ref{trilinear} \eqref{2d_1} and the Cauchy-Schwarz inequality,   
\EQQS{ 
 (R.H.S.\, of\, \eqref{O_431}) 
 &\lec \Bigl( \sum_{N,\, N' \ge 1} N^{2s_k} N'^{2s'} \Bigl( \sum_{N_2 \sim N,\, N_2' \lec N'} \sum_{N_1 \ll N_2,\, N_1' \sim N'} T^{3/4} \\ 
 &\qquad N_2'^{1/2+} \|n_{N_1,\, N_1'}\|_{L^{\infty}_t L^2_x} \|u_{N_2,\, N_2'}\|_{V^2_S} \Bigr)^2 \Bigr)^{1/2} \\ 
 &\lec T^{3/4} \|u\|_{Y^{s_k,\, s'}_S} \|n\|_{L^{\infty}_t H^{s_l,\, s'}_x}  
 \lec T^{3/4}JR. 
}
Let us estimate $K$. 
We first estimate high frequency part of the Duhamel term  
and we only consider the case $|\xi_1| = \max_{1 \le i \le d-1} |\xi_i|$, namely   
\EQQS{
 \Bigl( \sum_{N,\, N' \ge 1} N^{2(s_k + 1/2)} N'^{2s'} \Bigl\|P_{N,\, N'} \int_0^t S(t-t') (nu)(t') dt'\Bigr\|_{L^{\infty}_{x_1} L^2_{x_2, ... , x_d, t}}^2 \Bigr)^{1/2}. 
}
The above term is bounded by  
\EQQS{
 &\Bigl( \sum_{N,\, N' \ge 1} N^{2(s_k+1/2)} N'^{2s'} \Bigl\| P_{N,\, N'} \int_0^t S(t-t') \Bigl( \sum_{N_2 \lec N} \sum_{N_1 \sim N} + \! \sum_{N_2 \gg N} \sum_{N_1 \sim N_2} + \! \sum_{N_2 \sim N} \sum_{N_1 \ll N_2} \Bigr) \\ 
 &\qquad (n_{N_1} u_{N_2})(t')\, dt' \Bigr\|_{L^{\infty}_{x_1} L^2_{x_2, ... , x_d, t}}^2 \Bigr)^{1/2} \\ 
 &\lec K_1 + K_2 + K_3,  
} 
where 
\EQQS{
 K_1^2 &:= \sum_{N,\, N' \ge 1} N^{2(s_k+1/2)} N'^{2s'} \Bigl\| P_{N,\, N'} \int_0^t S(t-t') \sum_{N_2 \lec N} \sum_{N_1 \sim N} (n_{N_1} u_{N_2})(t')\, dt' \Bigr\|_{L^{\infty}_{x_1} L^2}^2, \\
 K_2^2 &:= \sum_{N,\, N' \ge 1} N^{2(s_k+1/2)} N'^{2s'} \Bigl\| P_{N,\, N'} \int_0^t S(t-t') \sum_{N_2 \gg N} \sum_{N_1 \sim N_2} (n_{N_1} u_{N_2})(t')\, dt' \Bigr\|_{L^{\infty}_{x_1} L^2}^2, \\
 K_3^2 &:= \sum_{N,\, N' \ge 1} N^{2(s_k+1/2)} N'^{2s'} \Bigl\| P_{N,\, N'} \int_0^t S(t-t') \sum_{N_2 \sim N} \sum_{N_1 \ll N_2} (n_{N_1} u_{N_2})(t')\, dt' \Bigr\|_{L^{\infty}_{x_1} L^2}^2.  
}
Let us separate $K_1$ into the following three parts. 
\EQQS{
 K_{1,\, 1} &:= \Bigl( \sum_{N,\, N' \ge 1} N^{2(s_k+1/2)} N'^{2s'} \Bigl\| P_{N,\, N'} \int_0^t S(t-t') \sum_{N_2 \lec N,\, N_2' \lec N'} \sum_{N_1 \sim N,\, N_1' \sim N'} \\ 
 &\qquad n_{N_1,\, N_1'} u_{N_2,\, N_2'}\, dt' \Bigr\|_{L^{\infty}_{x_1} L^2_{x_2, ... , x_d, t}}^2 \Bigr)^{1/2}, \\
 K_{1,\, 2} &:= \Bigl( \sum_{N,\, N' \ge 1} N^{2(s_k+1/2)} N'^{2s'} \Bigl\| P_{N,\, N'} \int_0^t S(t-t') \sum_{N_2 \lec N,\, N_2' \gg N'} \sum_{N_1 \sim N,\, N_1' \sim N_2'} \\ 
 &\qquad n_{N_1,\, N_1'} u_{N_2,\, N_2'}\, dt' \Bigr\|_{L^{\infty}_{x_1} L^2_{x_2, ... , x_d, t}}^2 \Bigr)^{1/2}, \\
 K_{1,\, 3} &:= \Bigl( \sum_{N,\, N' \ge 1} N^{2(s_k+1/2)} N'^{2s'} \Bigl\| P_{N,\, N'} \int_0^t S(t-t') \sum_{N_2 \lec N,\, N_2' \sim N'} \sum_{N_1 \sim N,\, N_1' \ll N_2'} \\ 
 &\qquad n_{N_1,\, N_1'} u_{N_2,\, N_2'}\, dt' \Bigr\|_{L^{\infty}_{x_1} L^2_{x_2, ... , x_d, t}}^2 \Bigr)^{1/2}. 
} 
Likewise, we have $K_i \lec \sum_{j=1}^3 K_{i,\, j},\, i=2, 3$. 
For simplicity, we only show the case $N_2' \lec N' \sim N_1'$, namely $K_{i,\, 1},\, i=1, 2, 3$. 
We estimate $K_{1,\, 1}$ as follows.  
By $|\xi_1| = \max_{1 \le i \le d-1} |\xi_i|$, Proposition \ref{inhomo_local_smoothing} \eqref{smoothing}, \eqref{O_411} and the estimate for $J_{4,\, 1,\, 1}$,  
\EQQS{
 K_{1,\, 1} &\lec \Bigl( \sum_{N,\, N' \ge 1} N^{2(s_k+1/2)-2} N'^{2s'} \Bigl\| P_{N,\, N'} \Bigl( \sum_{N_2 \lec N,\, N_2' \lec N'} \sum_{N_1 \sim N,\, N_1' \sim N'} \\ 
 &\qquad n_{N_1,\, N_1'} u_{N_2,\, N_2'} \Bigr) \Bigr\|_{L^1_{x_1} L^2_{x_2, ... , x_d, t}}^2 \Bigr)^{1/2} \\ 
 &\lec T^{1/2} MR.  
} 
We estimate $K_{2,\, 1}$. 
By Proposition \ref{inhomo_local_smoothing} \eqref{smoothing}, \eqref{O_421} and the estimate for $J_{4,\, 2,\, 1}$, 
\EQQS{
 K_{2,\, 1} 
 &:= \Bigl( \sum_{N,\, N' \ge 1} N^{2(s_k+1/2)} N'^{2s'} \Bigl\| P_{N,\, N'} \int_0^t S(t-t') \sum_{N_2 \gg N,\, N_2' \lec N'} \sum_{N_1 \sim N_2,\, N_1' \sim N'} \\ 
 &\qquad n_{N_1,\, N_1'} u_{N_2,\, N_2'}\, dt' \Bigr\|_{L^{\infty}_{x_1} L^2_{x_2, ... , x_d, t}}^2 \Bigr)^{1/2} \\
 &\lec \Bigl(\sum_{N,\, N' \ge 1} N^{2(s_k+1/2)-2} N'^{2s'} \Bigl\| P_{N,\, N'} \sum_{N_2 \gg N,\, N_2' \lec N'} \sum_{N_1 \sim N_2,\, N_1' \sim N'} \\ 
 &\qquad n_{N_1,\, N_1'} u_{N_2,\, N_2'} \Bigr\|_{L^1_{x_1} L^2_{x_2, ... , x_d, t}}^2 \Bigr)^{1/2} \\  
 &\lec T^{1/2} MR.    
}                                               
We estimate $K_{3,\, 1}$. 
From $|\xi_1| = \max_{1 \le i \le d-1} |\xi_i|$ and Remark \ref{loc_smoothing_U^2}, we have 
\EQQS{
 K_{3,\, 1} 
 &:= \Bigl( \sum_{N,\, N' \ge 1} N^{2(s_k+1/2)} N'^{2s'} \Bigl\| P_{N,\, N'} \int_0^t S(t-t') \sum_{N_2 \sim N,\, N_2' \lec N'} \sum_{N_1 \ll N_2,\, N_1' \sim N'} \\ 
 &\qquad n_{N_1,\, N_1'} u_{N_2,\, N_2'}\, dt' \Bigr\|_{L^{\infty}_{x_1} L^2}^2 \Bigr)^{1/2} \\ 
 &\lec \Bigl( \sum_{N,\, N' \ge 1} N^{2(s_k+1/2)-1} N'^{2s'} \Bigl\| P_{N,\, N'} \int_0^t S(t-t') \sum_{N_2 \sim N,\, N_2' \lec N'} \sum_{N_1 \ll N_2,\, N_1' \sim N'} \\
 &\qquad n_{N_1,\, N_1'} u_{N_2,\, N_2'}\, dt' \Bigr\|_{U^2_S}^2 \Bigr)^{1/2} \\ 
 &\lec \Bigl( \sum_{N,\, N' \ge 1} N^{2s_k} N'^{2s'} \sup_{\|v\|_{V^2_S}=1} \Bigl| \sum_{N_2 \sim N,\, N_2' \lec N'} \sum_{N_1 \ll N_2,\, N_1' \sim N'}  \\ 
 &\qquad \int_{\mathbb{R}^{d+1}} \1_{[0,\, T]} n_{N_1,\, N_1'} u_{N_2,\, N_2'} \overline{v_{N,\, N'}}\, dxdt \Bigr|^2 \Bigr)^{1/2}.  
}
From \eqref{O_431} and the estimate for $J_{4,\, 3,\, 1}$, we obtain 
\EQQS{ 
 K_{3,\, 1} \lec \begin{cases} 
                   T^{1/2} JR \qquad (d \ge 3), \\ 
                   T^{3/4} JR \qquad (d=2). 
                 \end{cases}
}

Let us estimate $M$. 
We only estimate the following high frequency part of the Duhamel term, namely    
\EQQS{
 \Bigl( \sum_{N,\, N' \ge 1} N^{2(s_k-(d-1)/2-\varepsilon)} N'^{2s'} \Bigl\| P_{N,\, N'}\int_0^t S(t-t') (nu)(t') \, dt' \Bigr\|_{L^2_{x_1} L^{\infty}_{x_2, ... , x_{d-1}, t} L^2_{x_d}}^2 \Bigr)^{1/2}.  
}
The above term is bounded by 
\EQQS{
 &\Bigl( \sum_{N,\, N' \ge 1} N^{2(s_k-(d-1)/2-\varepsilon)} N'^{2s'} \Bigl\| P_{N,\, N'}\int_0^t S(t-t') \Bigl( \sum_{N_2 \lec N} \sum_{N_1 \sim N} + \sum_{N_2 \gg N} \sum_{N_1 \sim N_2} \\ 
 &\qquad + \sum_{N_2 \sim N} \sum_{N_1 \ll N_2} \Bigr) (n_{N_1}u_{N_2})(t') \, dt' \Bigr\|_{L^2_{x_1} L^{\infty}_{x_2, ... , x_{d-1}, t} L^2_{x_d}}^2 \Bigr)^{1/2} \\ 
 &\lec M_1 + M_2 + M_3,   
}
where 
\EQQS{
 M_1 &:= \Bigl( \sum_{N,\, N' \ge 1} N^{2(s_k-(d-1)/2-\varepsilon)} N'^{2s'} \Bigl\| P_{N,\, N'}\int_0^t S(t-t') \\ 
 &\qquad \sum_{N_2 \lec N} \sum_{N_1 \sim N} (n_{N_1}u_{N_2})(t') \, dt' \Bigr\|_{L^2_{x_1} L^{\infty}_{x_2, ... , x_{d-1}, t} L^2_{x_d}}^2 \Bigr)^{1/2}, \\ 
 M_2 &:= \Bigl( \sum_{N,\, N' \ge 1} N^{2(s_k-(d-1)/2-\varepsilon)} N'^{2s'} \Bigl\| P_{N,\, N'}\int_0^t S(t-t') \\ 
 &\qquad \sum_{N_2 \gg N} \sum_{N_1 \sim N_2} (n_{N_1}u_{N_2})(t') \, dt' \Bigr\|_{L^2_{x_1} L^{\infty}_{x_2, ... , x_{d-1}, t} L^2_{x_d}}^2 \Bigr)^{1/2}, \\ 
 M_3 &:= \Bigl( \sum_{N,\, N' \ge 1} N^{2(s_k-(d-1)/2-\varepsilon)} N'^{2s'} \Bigl\| P_{N,\, N'}\int_0^t S(t-t') \\ 
 &\qquad \sum_{N_2 \sim N} \sum_{N_1 \ll N_2} (n_{N_1}u_{N_2})(t') \, dt' \Bigr\|_{L^2_{x_1} L^{\infty}_{x_2, ... , x_{d-1}, t} L^2_{x_d}}^2 \Bigr)^{1/2}.    
}
From Proposition \ref{inhomo_maximal_fcn} \eqref{maximal2} and $|\xi_1| = \max_{1 \le i \le d-1} |\xi_i|$,  $M_1$ is bounded by 
\EQS{
 \Bigl( \sum_{N,\, N' \ge 1} N^{2(s_k - 1/2)} N'^{2s'} \| P_{N,\, N'} \sum_{N_2 \lec N} \sum_{N_1 \sim N} n_{N_1}u_{N_2}\|_{L^1_{x_1} L^2_{x_2, ... , x_d, t}} \Bigr)^{1/2}. 
                        \label{est_J} 
}
Then by $s_k - s_l = 1/2$, \eqref{est_J} is bounded by $M_{1,\, i},\, i=1, 2, 3$,   
\EQQS{
 M_1 &\lec \Bigl( \sum_{N,\, N' \ge 1} N^{2s_l} N'^{2s'} \Bigl\| \Bigl( \sum_{N_2 \lec  N,\,  N_2' \lec N'}\sum_{N_1 \sim N,\, N_1' \sim N'} + \sum_{N_2 \lec N,\,  N_2' \gg N'}\sum_{N_1 \sim N,\, N_1' \sim N_2'} \\ 
 &\qquad + \sum_{N_2 \lec N,\, N_2' \sim N'}\sum_{N_1 \sim N,\,  N_1' \ll N_2'} \Bigr) P_{N,\, N'}(n_{N_1, N_1'}\, u_{N_2, N_2'}) \Bigr\|_{L^1_{x_1} L^2_{x_2, ... , x_d, t}}^2 \Bigr)^{1/2}, \\  
 &\lec M_{1,\, 1} + M_{1,\, 2} + M_{1,\, 3},  
}
where 
\EQQS{
 M_{1,\, 1}^2 &:= \sum_{N,\, N' \ge 1} N^{2s_l} N'^{2s'} \Bigl\| P_{N,\, N'} \Bigl( \sum_{N_2 \lec  N,\,  N_2' \lec N'}\sum_{N_1 \sim N,\, N_1' \sim N'} n_{N_1, N_1'}\, u_{N_2, N_2'} \Bigr) \Bigr\|_{L^1_{x_1} L^2}^2, \\ 
 M_{1,\, 2}^2 &:= \sum_{N,\, N' \ge 1} N^{2s_l} N'^{2s'} \Bigl\| P_{N,\, N'} \Bigl( \sum_{N_2 \lec N,\,  N_2' \gg N'}\sum_{N_1 \sim N,\, N_1' \sim N_2'} n_{N_1, N_1'}\, u_{N_2, N_2'} \Bigr) \Bigr\|_{L^1_{x_1} L^2}^2, \\ 
 M_{1,\, 3}^2 &:= \sum_{N,\, N' \ge 1} N^{2s_l} N'^{2s'} \Bigl\| P_{N,\, N'} \Bigl( \sum_{N_2 \lec N,\, N_2' \sim N'}\sum_{N_1 \sim N,\,  N_1' \ll N_2'} n_{N_1, N_1'}\, u_{N_2, N_2'} \Bigr) \Bigr\|_{L^1_{x_1} L^2}^2. 
}
From \eqref{O_411} and the estimate for $J_{4,\, 1,\, 1}$, we obtain 
$M_{1,\, 1} \lec T^{1/2}MR$.  
Similarly we can check $M_{1,\, i} \lec T^{1/2} MR, i = 2, 3$. 
For the estimate of $M_2$, we separate $M_2$ into three parts $M_{2,\, i}, i=1, 2, 3$ as $M_{1,\, i}$ above, then we can obtain $M_{2,\, i} \lec T^{1/2}MR, i=1, 2, 3$.  
For instance, the estimate for $M_{2,\, 1}$ is derived by \eqref{O_421} and the estimate for $J_{4,\, 2,\, 1}$.  
Now we estimate $M_3$. 
From Proposition \ref{maximal_function} \eqref{maximal_function_estimate}, 
$M_3$ is bounded by 
\EQQS{
 &\Bigl( \sum_{N,\, N' \ge 1} N^{2s_k} N'^{2s'} \Bigl\| P_{N,\, N'} \int_0^t S(t-t') \sum_{N_2 \sim N} \sum_{N_1 \ll N_2} (n_{N_1} u_{N_2})(t')\, dt' \Bigr\|_{U^2_S}^2 \Bigr)^{1/2} \\ 
 &\lec \Bigl( \sum_{N,\, N' \ge 1} N^{2s_k} N'^{2s'} \sup_{\|v\|_{V^2_S} = 1} \Bigl|  \Bigl( \sum_{N_2 \sim N,\, N_2' \lec N'} \sum_{N_1 \ll N_2,\, N_1' \sim N'} + \sum_{N_2 \sim N,\, N_2' \gg N'} \sum_{N_1 \ll N_2,\, N_1' \lec N_2'} \\ 
 &\qquad + \sum_{N_2 \sim N,\, N_2' \sim N'} \sum_{N_1 \ll N_2,\, N_1' \ll N_2'} \Bigr) \int_{\mathbb{R}^{d+1}} \1_{[0,\, T]} n_{N_1,\, N_1'} u_{N_2,\, N_2'} \overline{v_{N,\, N'}} \, dxdt \Bigr|^2 \Bigr)^{1/2} \\ 
 &\lec M_{3,\, 1} + M_{3,\, 2} + M_{3,\, 3},  
}
where 
\EQQS{
 M_{3,\, 1}^2 &:= \sum_{N,\, N' \ge 1} N^{2s_k} N'^{2s'} \sup_{\|v\|_{V^2_S} = 1} \Bigl| \sum_{N_2 \sim N,\, N_2' \lec N'} \sum_{N_1 \ll N_2,\, N_1' \sim N'} \\ 
 &\qquad \int_{\mathbb{R}^{d+1}} \1_{[0,\, T]} n_{N_1,\, N_1'} u_{N_2,\, N_2'} \overline{v_{N,\, N'}} \, dxdt \Bigr|^2, \\ 
 M_{3,\, 2}^2 &:= \sum_{N,\, N' \ge 1} N^{2s_k} N'^{2s'} \sup_{\|v\|_{V^2_S} = 1} \Bigl| \sum_{N_2 \sim N,\, N_2' \gg N'} \sum_{N_1 \ll N_2,\, N_1' \lec N_2'} \\ 
 &\qquad \int_{\mathbb{R}^{d+1}} \1_{[0,\, T]} n_{N_1,\, N_1'} u_{N_2,\, N_2'} \overline{v_{N,\, N'}} \, dxdt \Bigr|^2, \\ 
 M_{3,\, 3}^2 &:= \sum_{N,\, N' \ge 1} N^{2s_k} N'^{2s'} \sup_{\|v\|_{V^2_S} = 1} \Bigl| \sum_{N_2 \sim N,\, N_2' \sim N'} \sum_{N_1 \ll N_2,\, N_1' \ll N_2'} \\ 
 &\qquad \int_{\mathbb{R}^{d+1}} \1_{[0,\, T]} n_{N_1,\, N_1'} u_{N_2,\, N_2'} \overline{v_{N,\, N'}} \, dxdt \Bigr|^2. 
}
From \eqref{O_431} and the estimate for $J_{4,\, 3,\, 1}$, we have 
\EQ{ \label{est_M_3}
 M_{3,\, 1} \lec \begin{cases}
                   T^{1/2} JR \qquad (d \ge 3), \\ 
                   T^{3/4} JR \qquad (d=2). 
                 \end{cases}
} 
Similarly, we can check $M_{3,\, 2}, M_{3,\, 3}$ is bounded by the right-hand side of \eqref{est_M_3}. 
We estimate the low frequency part for the Schr\"odinger equation below. 
From Proposition \ref{inhomo_L^infty_tL^2_x} \eqref{lowfreqconti}, Proposition \ref{inhomo_local_smoothing} \eqref{lowfreqsmoothing} and Proposition \ref{inhomo_maximal_fcn} \eqref{lowfreqmaximal_2}, we have 
\EQS{ 
 &\Bigl\| P_0 \int_0^t S(t-t') (nu)(t')\, dt' \Bigr\|_{L^{\infty}_t L^2_x \, \cap \, L^{\infty}_{x_1} L^2_{x_2, ... , x_d, t} \, \cap \, L^2_{x_1} L^{\infty}_{x_2, ... , x_{d-1}, t} L^2_{x_d}} \notag \\ 
 &\lec \|P_0 (nu)\|_{L^1_{x_1} L^2_{x_2, ... , x_d, t}} \notag \\ 
 &\lec T^{1/2} \|n\|_{L^{\infty}_t L^2_{\tilde{x}} H^{s'}_{x_d}} \|u\|_{L^2_{x_1} L^{\infty}_{x_2, ... , x_{d-1}, t} L^2_{x_d}} \notag \\ 
 &\lec T^{1/2} MR.  
                  \label{low_freq_S}
}
Moreover, we have 
\EQQS{
 \Bigl\| P_0 \int_0^t S(t-t')(nu)(t')\, dt' \Bigr\|_{V^2_S} 
 &\lec \sup_{\|v\|_{U^2_S}=1} \Bigl| \int_{\mathbb{R}^{d+1}} \1_{[0,\, T]} nu \overline{P_0 v}\, dxdt \Bigr| \\ 
 &\qquad + \Bigl\| P_0 \int_0^t S(t-t')(nu)(t')\, dt' \Bigr\|_{L^{\infty}_t L^2_x}.   
}
From \eqref{low_freq_S}, the second term of the right-hand side of the above inequality is bounded by $T^{1/2} MR$. 
The first term is estimated by the H\"older inequality and the Sobolev inequality 
\EQQS{
 \Bigl| \int_{\mathbb{R}^{d+1}} \1_{[0,\, T]} nu \overline{P_0 v}\, dxdt \Bigr| 
 &\lec T \|n\|_{L^{\infty}_t L^2_x} \|u\|_{L^{\infty}_t L^2_x} \| \LR{\nabla_{\tilde{x}}}^{(d-1)/2+} \LR{\partial_{x_d}}^{1/2+} P_0 v \|_{L^{\infty}_t L^2_x} \\ 
 &\lec T \|n\|_{Z^{s_l,\, s'}_{W_{\pm}}} \|u\|_{Y^{s_k,\, s'}_S} \|P_0 v\|_{V^2_S}. 
} 
Hence by $U^2_S \hookrightarrow V^2_S$ we have 
\EQQS{
 \sup_{\|v\|_{U^2_S}=1} \Bigl| \int_{\mathbb{R}^{d+1}} \1_{[0,\, T]} nu \overline{P_0 v}\, dxdt \Bigr| \lec TJR. 
}  
We estimate the wave part. 
We need to estimate the following. 
\EQQS{
 \Bigl( \sum_{N_1,\, N_1' \ge 1} N_1^{2s_l} N_1'^{2s'} \sup_{\|n\|_{V^2_{W_{\pm}}} = 1} \Bigl| \int_{\mathbb{R}^{d+1}} \1_{[0,\, T]} u \bar{v} \, \overline{\omega n_{N_1,\, N_1'}}\, dxdt \Bigr|^2 \Bigr)^{1/2}. 
}
The above term is bounded by 
\EQQS{
 &\Bigl( \sum_{N_1,\, N_1' \ge 1} N_1^{2s_l} N_1'^{2s'} \sup_{\|n\|_{V^2_{W_{\pm}}} = 1} \Bigl| \Bigl( \sum_{N_2 \ll N} \sum_{N \sim N_1} + \sum_{N_2 \gec N_1} \sum_{N \sim N_2} + \sum_{N_2 \sim N_1} \sum_{N \ll N_2} \Bigr) \\ 
 &\qquad \int_{\mathbb{R}^{d+1}} \1_{[0,\, T]} u_{N_2} \overline{v_N}\, \overline{\omega n_{N_1,\, N_1'}}\, dxdt \Bigr|^2 \Bigr)^{1/2} \\ 
 &\lec R_1 + R_2 + R_3, 
}
where 
\EQQS{
 R_1^2 &:= \sum_{N_1,\, N_1' \ge 1} N_1^{2s_l} N_1'^{2s'} \sup_{\|n\|_{V^2_{W_{\pm}}} = 1} \Bigl|  \sum_{N_2 \ll N} \sum_{N \sim N_1} \int_{\mathbb{R}^{d+1}} \1_{[0,\, T]} u_{N_2} \overline{v_N}\, \overline{\omega n_{N_1,\, N_1'}}\, dxdt \Bigr|^2, \\ 
 R_2^2 &:= \sum_{N_1,\, N_1' \ge 1} N_1^{2s_l} N_1'^{2s'} \sup_{\|n\|_{V^2_{W_{\pm}}} = 1} \Bigl| \sum_{N_2 \gec N_1} \sum_{N \sim N_2} \int_{\mathbb{R}^{d+1}} \1_{[0,\, T]} u_{N_2} \overline{v_N}\, \overline{\omega n_{N_1,\, N_1'}}\, dxdt \Bigr|^2, \\
 R_3^2 &:= \sum_{N_1,\, N_1' \ge 1} N_1^{2s_l} N_1'^{2s'} \sup_{\|n\|_{V^2_{W_{\pm}}} = 1} \Bigl| \sum_{N_2 \sim N_1} \sum_{N \ll N_2} \int_{\mathbb{R}^{d+1}} \1_{[0,\, T]} u_{N_2} \overline{v_N}\, \overline{\omega n_{N_1,\, N_1'}}\, dxdt \Bigr|^2. 
}
$R_1$ is bounded by 
\EQQS{ 
 &\Bigl( \sum_{N_1,\, N_1' \ge 1} N_1^{2s_l+2} N_1'^{2s'} \sup_{\|n\|_{V^2_{W_{\pm}}}=1} \Bigl| \Bigl( 
 \sum_{N_2 \ll N,\, N_2' \ll N_1'} \sum_{N \sim N_1,\, N' \sim N_1'} 
 + \sum_{N_2 \ll N,\, N_2' \gec N_1'} \sum_{N \sim N_1,\, N' \sim N_2'} \\ 
 &\qquad 
 + \sum_{N_2 \ll N,\, N_2' \sim N_1'} \sum_{N \sim N_1,\, N' \ll N_2'} \Bigr) \int_{\mathbb{R}^{d+1}} \1_{[0,\, T]} u_{N_2,\, N_2'} \overline{v_{N,\, N'}} \overline{n_{N_1,\, N_1'}}\, dxdt \Bigr|^2 \Bigr)^{1/2} \\ 
 &\lec R_{1,\, 1} + R_{1,\, 2} + R_{1,\, 3}, 
} 
where 
\EQQS{
 R_{1,\, 1}^2 &:= \sum_{N_1,\, N_1' \ge 1} N_1^{2s_l+2} N_1'^{2s'} \sup_{\|n\|_{V^2_{W_{\pm}}}=1} \Bigl| 
 \sum_{N_2 \ll N,\, N_2' \ll N_1'} \sum_{N \sim N_1,\, N' \sim N_1'} \\ 
 &\qquad \int_{\mathbb{R}^{d+1}} \1_{[0,\, T]} u_{N_2,\, N_2'} \overline{v_{N,\, N'}} \overline{n_{N_1,\, N_1'}}\, dxdt \Bigr|^2, \\ 
 R_{1,\, 2}^2 &:= \sum_{N_1,\, N_1' \ge 1} N_1^{2s_l+2} N_1'^{2s'} \sup_{\|n\|_{V^2_{W_{\pm}}}=1} \Bigl| 
 \sum_{N_2 \ll N,\, N_2' \gec N_1'} \sum_{N \sim N_1,\, N' \sim N_2'} \\ 
 &\qquad \int_{\mathbb{R}^{d+1}} \1_{[0,\, T]} u_{N_2,\, N_2'} \overline{v_{N,\, N'}} \overline{n_{N_1,\, N_1'}}\, dxdt \Bigr|^2, \\ 
 R_{1,\, 3}^2 &:= \sum_{N_1,\, N_1' \ge 1} N_1^{2s_l+2} N_1'^{2s'} \sup_{\|n\|_{V^2_{W_{\pm}}}=1} \Bigl| 
 \sum_{N_2 \ll N,\, N_2' \sim N_1'} \sum_{N \sim N_1,\, N' \ll N_2'} \\ 
 &\qquad \int_{\mathbb{R}^{d+1}} \1_{[0,\, T]} u_{N_2,\, N_2'} \overline{v_{N,\, N'}} \overline{n_{N_1,\, N_1'}}\, dxdt \Bigr|^2.  
}
We estimate $R_{1,\, 1}$. 
From \eqref{estimate_i}, $s_k - s_l = 1/2$,   
$V^2_{W_{\pm}} \hookrightarrow L^{\infty}_t L^2_x$ and the Cauchy-Schwarz inequality we have 
\EQQS{
 R_{1,\, 1} &\lec T^{1/2} \Bigl( \sum_{N,\, N' \gec 1} N^{2(s_k+1/2)} N'^{2s'} \|v_{N,\, N'}\|_{L^{\infty}_{x_1} L^2_{x_2, ... , x_d, t}}^2 \Bigl( \| P_0 u \|_{L^2_{x_1} L^{\infty}_{x_2, ... , x_{d-1}, t} L^2_{x_d}} + \\ 
 &\quad \Bigl( \sum_{1 \le N_2 \ll N,\, 1 \le N_2' \ll N'} N_2^{2(s_k-(d-1)/2-\varepsilon)} N_2'^{2s'} \|u_{N_2,\, N_2'}\|_{L^2_{x_1} L^{\infty}_{x_2, ... , x_{d-1}, t} L^2_{x_d}}^2 \Bigr)^{1/2} \Bigr)^2 \Bigr)^{1/2} \\ 
 &\lec T^{1/2} KM. 
}
From $L^{\infty}(\mathbb{R}) \hookrightarrow H^s(\mathbb{R})$ with $s > 1/2$ for $x_d$ variable, $R_{1,\, 2}, R_{1,\, 3}$ are treated similar to the case $R_{1,\, 1}$. 
We estimate $R_{2,\, 1}$. 
For $d \ge 3$, from Lemma \ref{trilinear} \eqref{3dim_1}, $V^2_{W_{\pm}} \hookrightarrow L^{\infty}_t L^2_x$ and the Cauchy-Schwarz inequality we have 
\EQQS{ 
 R_{2,\, 1} 
 &= \Bigl( \sum_{N_1,\, N_1' \ge 1} N_1^{2s_l+2} N_1'^{2s'} \sup_{\|n\|_{V^2_{W_{\pm}}}=1} \Bigl| \sum_{N_2 \gec N_1,\, N_2' \ll N_1'} \sum_{N \sim N_2,\, N' \sim N_1'} \\ 
 &\qquad \int_{\mathbb{R}^{d+1}} \1_{[0,\, T]} u_{N_2,\, N_2'} \overline{v_{N,\, N'}} \overline{n_{N_1,\, N_1'}}\, dxdt \Bigr|^2 \Bigr)^{1/2} \\ 
 &\lec T^{1/2} \sum_{N_2 \gec 1,\, N_2'} \sum_{N \sim N_2,\, N' \gec 1} \Bigl( \sum_{N_1 \lec N_2,\, N_1' \sim N'} N_1^{2(s_k + 1/2 + (d-3)/2 + )} N_1'^{2s'} N_2'^{1+} \\ 
 &\qquad \|u_{N_2,\, N_2'}\|_{V^2_S}^2 \|v_{N,\, N'}\|_{V^2_{W_{\pm}}}^2 \Bigr)^{1/2} \\ 
 &\lec T^{1/2} \|u\|_{Y^{s_k,\, s'}_S} \|v\|_{Y^{s_k,\, s'}_S} 
 \lec T^{1/2} J^2. 
}
For $d=2$, Lemma \ref{trilinear} \eqref{2d_1}, $V^2_{W_{\pm}} \hookrightarrow L^{\infty}_t L^2_x$, the Cauchy-Schwarz inequality leads   
\EQQS{
 R_{2,\, 1} &\lec T^{3/4} \sum_{N_2 \gec 1,\, N_2'} \sum_{N \sim N_2,\, N' \gec 1}  \Bigl( \sum_{N_1 \lec N_2,\, N_1' \sim N'} N_1^{2(s_k+1/2)} N_1'^{2s'} N_2'^{1+} \\  
 &\qquad \|u_{N_2,\, N_2'}\|_{V^2_S}^2 \|v_{N,\, N'}\|_{V^2_S}^2 \Bigr)^{1/2} \\ 
 &\lec T^{3/4} \|u\|_{Y^{s_k,\, s'}_S} \|v\|_{Y^{s_k,\, s'}_S} 
 \lec T^{3/4} J^2. 
}
Similar to the estimate for $R_{2,\, 1}$, we can obtain the estimate for $R_{2,\, 2}, R_{2,\, 3}$. 
By symmetry, the estimate for $R_3$ is obtained in a similar way to $R_1$. 
Finally we estimate the low frequency part of the Duhamel term for the wave equation. 
\EQS{
 \Bigl\| P_0 \int_0^t W_{\pm}(t-t') \omega (u \bar{v})(t')\, dt' \Bigr\|_{U^2_{W_{\pm}}} 
 \lec \sup_{\|n\|_{V^2_{W_{\pm}}}=1} \Bigl| \int_{\mathbb{R}^{d+1}} \1_{[0,\, T]} u\bar{v} \overline{P_0 \omega n}\, dxdt \Bigr|. 
          \label{low_wave} 
}
By the H\"older inequality and $\|u\|_{L^4_t L^{2(d-1)/(d-2)}_{\tilde{x}} L^2_{x_d}} \lec \|u\|_{Y^{s_k,\, s'}_S}$, the right-hand side of \eqref{low_wave} is bounded by  
\EQQS{
 &T^{1/2} \sup_{\|n\|_{V^2_{W_{\pm}}}=1} \|u\|_{L^4_t L^{2(d-1)/(d-2)}_{\tilde{x}} L^2_{x_d}} \|v\|_{L^4_t L^{2(d-1)/(d-2)}_{\tilde{x}} L^2_{x_d}} 
 \|P_0 \omega n\|_{L^{\infty}_t L^{d-1}_{\tilde{x}} L^{\infty}_{x_d}} \\ 
 &\lec T^{1/2}\|u\|_{Y^{s_k,\, s'}_S} \|v\|_{Y^{s_k,\, s'}_S} 
 \lec T^{1/2} J^2 
}
for $d \ge 3$. 
For $d=2$, we see 
\EQQS{
 &\sup_{\|n\|_{V^2_{W_{\pm}}}=1} \Bigl| \int_{\mathbb{R}^3} \1_{[0,\, T]} u\bar{v} \overline{P_0 \omega n}\, dxdt \Bigr| \\ 
 &\lec T^{3/4} \sup_{\|n\|_{V^2_{W_{\pm}}}=1} \|u\|_{L^4_t L^{\infty}_{x_1} L^2_{x_2}} \|v\|_{L^{\infty}_t L^2_x} \| \LR{\partial_{x_2}}^{1/2+} P_0 \omega n \|_{L^{\infty}_t L^2_x} \\ 
 &\lec T^{3/4} \|u\|_{Y^{s_k,\, s'}_S} \|v\|_{Y^{s_k,\, s'}_S} 
 \lec T^{3/4} J^2.  
}

\end{proof}

Therefore for $0 < T < 1$, we have 
\EQQS{
 &\|\mathcal{G}u\|_{X_S} \lec \|u_0\|_{H^{s_k,\, s'}_x} + T^{1/2} \|n\|_{Z^{s_l,\, s'}_{W_{\pm}}} \|u\|_{X_S}, \\ 
 &\|\tilde{\mathcal{G}}n\|_{Z^{s_l,\, s'}_{W_{\pm}}} \lec \|n_{\pm 0}\|_{H^{s_l,\, s'}_x} + T^{1/2} \|u\|_{X_S}^2, \\ 
 &\|\mathcal{G}u - \mathcal{G}v\|_{X_S} \lec T^{1/2} (\|n-m\|_{Z^{s_l,\, s'}_{W_{\pm}}}\|u\|_{X_S} + \|m\|_{Z^{s_l,\, s'}_{W_{\pm}}}\|u-v\|_{X_S}), \\ 
 &\|\tilde{\mathcal{G}}n - \tilde{\mathcal{G}}m\|_{Z^{s_l,\, s'}_{W_{\pm}}} \lec T^{1/2} \|u-v\|_{X_S}(\|u\|_{X_S} + \|v\|_{X_S}).  
}
This shows the existence and the uniqueness of a local solution $u, n_{\pm}$ in $X_S, Z^{s_l,\, s'}_{W_{\pm}}$ with $T=T(\|u_0\|_{H^{s_k,\, s'}_x}, \|n_{\pm 0}\|_{H^{s_l,\, s'}_x})$ small enough.

\section{Appendix} 
In this section we show the bilinear Strichartz estimate, namely Proposition \ref{x01/2}. 
For the Zakharov system, see \cite{BHHT} and \cite{BH}. 
For dyadic numbers $N,\, N',\, L$, we set 
\EQQS{ 
 &\mathcal{P}_{N,\, N'} := \{ (\xi,\, \xi') \in \mathbb{R}^{d-1} \times \mathbb{R}\, |\, N/2 \le |\xi| \le 2N,\, N'/2 \le |\xi'| \le 2N' \}, \\
 &\mathcal{P}_{N,\, 0} := \{ (\xi,\, \xi') \in \mathbb{R}^{d-1} \times \mathbb{R}\, |\, N/2 \le |\xi| \le 2N,\, |\xi'| \le 2 \}, \\
 &\mathcal{P}_{0,\, N'} := \{ (\xi,\, \xi') \in \mathbb{R}^{d-1} \times \mathbb{R}\, |\, |\xi| \le 2,\, N'/2 \le |\xi'| \le 2N' \}, \\
 &\mathcal{P}_0 = \mathcal{P}_{0,\, 0} := \{ (\xi,\, \xi') \in \mathbb{R}^{d-1} \times \mathbb{R}\, |\, |\xi| \le 2,\, |\xi'| \le 2 \}, \\
 &\mathcal{W}_L^{\pm} := \{ (\tau,\, \xi,\, \xi') \in \mathbb{R} \times \mathbb{R}^{d-1} \times \mathbb{R} \, |\, L/2 \le \bigl|\tau \pm |\xi| \bigr| \le 2L \}, \\
 &\mathcal{W}_0^{\pm} := \{ (\tau,\, \xi,\, \xi') \in \mathbb{R} \times \mathbb{R}^{d-1} \times \mathbb{R} \, |\, \bigl|\tau \pm |\xi| \bigr| \le 2 \}, \\
 &\mathcal{S}_L := \{ (\tau,\, \xi,\, \xi') \in \mathbb{R} \times \mathbb{R}^{d-1} \times \mathbb{R} \, |\, L/2 \le \bigl|\tau + |\xi|^2 + \xi' \bigr| \le 2L \}, \\ 
 &\mathcal{S}_0 := \{ (\tau,\, \xi,\, \xi') \in \mathbb{R} \times \mathbb{R}^{d-1} \times \mathbb{R} \, |\, \bigl|\tau + |\xi|^2 + \xi' \bigr| \le 2 \}.  
}

\begin{prop}   \label{x01/2} 
Let $d \ge 2$. $\rm(\hspace{.18em}i-a\hspace{.18em})$ Let $u, v \in L^2(\mathbb{R}^{1+d})$ be such that 
\EQQS{
 \supp \F u \subset \mathcal{W}_{L_1}^{\pm} \cap \bigl( \mathbb{R} \times ((C \times \mathbb{R}) \cap \mathcal{P}_{N_1,\, N_1'})\bigr),\qquad \supp \F v \subset \mathcal{S}_{L_2} \cap (\mathbb{R} \times \mathcal{P}_{N_2,\, N_2'}) 
} 
for dyadic numbers $L_i,\, N_i,\, N_i'\ (i=1,\, 2)$ and a cube $C \subset \mathbb{R}^{d-1}$ of side length $e$. 
If $N_1 \lec N_2,\, N_2 \gg 1$ and $N_1' \gec N_2'$, it holds that  
\EQQS{
 \|uv\|_2 \lec N_2^{-1/2}N_1^{(d-2)/2}N_2'^{1/2} L_1^{1/2} L_2^{1/2} \|u\|_2 \|v\|_2.     
}
$\rm(\hspace{.18em}i-b\hspace{.18em})$ Let $u, v \in L^2(\mathbb{R}^{1+d})$ be such that 
\EQQS{
 \supp \F u \subset \mathcal{W}_{L_1}^{\pm} \cap \bigl( \mathbb{R} \times ((C \times \mathbb{R}) \cap \mathcal{P}_{N_1,\, N_1'})\bigr),\qquad \supp \F v \subset \mathcal{S}_{L_2} \cap (\mathbb{R} \times \mathcal{P}_{N_2,\, N_2'}) 
} 
for dyadic numbers $L_i,\, N_i,\, N_i'\ (i=1,\, 2)$ and a cube $C \subset \mathbb{R}^{d-1}$ of side length $N$.  
If $N_1 \sim N_2 \gg N,\, N_2 \gg 1$ and $N_1' \sim N_2' \gec N'$, it holds that  
\EQQS{
 \|P_{N,\, N'} (uv)\|_2 \lec N_2^{-1/2}N^{(d-2)/2}N'^{1/2} L_1^{1/2} L_2^{1/2} \|u\|_2 \|v\|_2.     
}
$\rm(\hspace{.08em}ii\hspace{.08em})$ Let $u, v \in L^2(\mathbb{R}^{1+d})$ be such that 
\EQQS{
 \supp \F u \subset \mathcal{S}_{L_3} \cap (\mathbb{R} \times \mathcal{P}_{N_3,\, N_3'}), \qquad \supp \F v \subset \mathcal{S}_{L_4} \cap (\mathbb{R} \times \mathcal{P}_{N_4,\, N_4'}) 
}
for dyadic numbers $L_i,\, N_i,\, N_i'\ (i=3,\, 4)$. 
If $N_3 \ll N_4,\, N_4 \gg 1$ and $N_3' \lec N_4'$, it holds that 
\EQQS{
 \|uv\|_2 \lec N_4^{-1/2} N_3^{(d-2)/2} N_3'^{1/2} L_3^{1/2} L_4^{1/2} \|u\|_2 \|v\|_2.  
}
\end{prop}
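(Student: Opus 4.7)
The plan is to follow the standard $L^2$-convolution approach for bilinear estimates. By Plancherel, $\|uv\|_2^2 = \|\F u \ast \F v\|_2^2$, and Cauchy--Schwarz applied to the convolution gives, for each $(\tau,\xi,\eta)$,
\[
|\F u \ast \F v(\tau,\xi,\eta)|^2 \le |E(\tau,\xi,\eta)| \int |\F u(\tau_1,\xi_1,\eta_1)|^2 |\F v(\tau-\tau_1,\xi-\xi_1,\eta-\eta_1)|^2 d\tau_1 d\xi_1 d\eta_1,
\]
where $E(\tau,\xi,\eta)$ is the set of $(\tau_1,\xi_1,\eta_1)$ for which both Fourier supports are simultaneously occupied. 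Integrating over $(\tau,\xi,\eta)$ and applying Fubini reduces each of (i-a), (i-b), (ii) to bounding $\sup_{(\tau,\xi,\eta)} |E|$, where in (i-b) the supremum may be restricted to $|\xi|\sim N$, $|\eta|\sim N'$ because of the outer $P_{N,N'}$.

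For (i-a) and (i-b), I would fix $(\tau,\xi,\eta)$ and exploit the wave constraint via the change of variables $s_1 := \tau_1 \pm |\xi_1|$, $|s_1|\le L_1$. Substituting $\tau_1 = \mp |\xi_1| + s_1$ into the Schr\"odinger constraint turns it into the scalar equation
\[
\eta_1 = g(\xi_1) + \tau + \eta - s_1 + O(L_2), \qquad g(\xi_1) := \pm |\xi_1| + |\xi-\xi_1|^2.
\]
Integrating in $s_1$ and $\eta_1$ (noting that $N_1' \gec N_2'$ forces $\eta_1$ into a strip of length $\lec N_2'$, resp.\ $\lec N'$ in (i-b)) yields a factor $L_1 L_2$ together with the requirement $g(\xi_1)\in J$ where $|J|\lec N_2'$ (resp.\ $\lec N'$). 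The key geometric step is the coarea estimate: $\nabla g(\xi_1) = -2(\xi-\xi_1)\pm \xi_1/|\xi_1|$ has modulus $\sim N_2$ in the regime $|\xi-\xi_1|\sim N_2 \gg 1$, so the level set of $g$ intersected with $\{|\xi_1|\sim N_1\}$ (in (i-a)) or with the cube of side $N$ (in (i-b)) has $(d-2)$-Hausdorff measure $\lec N_1^{d-2}$ (resp.\ $\lec N^{d-2}$), and thickening to normal-direction width $|J|/N_2$ gives
\[
|E| \lec \frac{L_1 L_2\, N_1^{d-2} N_2'}{N_2} \ \text{for (i-a)},\qquad |E| \lec \frac{L_1 L_2\, N^{d-2} N'}{N_2} \ \text{for (i-b)}.
\]
Extracting $|E|^{1/2}$ matches the claimed bounds.

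For (ii) I would apply the same scheme with the two Schr\"odinger hyperboloids. Fixing $(\xi_1,\eta_1)$, the intersection of the two $\tau_1$-intervals has measure $\min(L_3,L_4)$ and is nonempty only when
\[
h(\xi_1):=|\xi_1|^2+|\xi-\xi_1|^2 \in J', \qquad |J'|\lec L_3+L_4.
\]
Since $\nabla h = 4\xi_1-2\xi$ and $|\xi|\sim N_4$ (because $N_3\ll N_4$ forces $|\xi-\xi_1|\sim N_4\sim|\xi|$), we have $|\nabla h|\sim N_4$, and the same coarea argument gives $|\{\xi_1:|\xi_1|\sim N_3,\ h\in J'\}| \lec N_3^{d-2}(L_3+L_4)/N_4$. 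Integrating over $\eta_1$ with $|\eta_1|\sim N_3'$ contributes a factor $N_3'$, and using $\min(L_3,L_4)(L_3+L_4) \lec L_3 L_4$ one gets $|E| \lec L_3 L_4 N_3^{d-2} N_3'/N_4$, giving (ii).

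The main obstacle is the coarea/level-set volume estimate: the separation-of-scales hypotheses ($N_1 \lec N_2 \gg 1$ for (i), $N_3 \ll N_4$ for (ii)) are used in an essential way to guarantee the gradient bounds $|\nabla g|\sim N_2$ and $|\nabla h|\sim N_4$, which translate the transversality of the wave/Schr\"odinger (resp.\ Schr\"odinger/Schr\"odinger) characteristic surfaces into the decisive $N_2^{-1}$ or $N_4^{-1}$ gain. Careful but routine bookkeeping is also needed at the step of intersecting the two modulation intervals, so that $\min(L_i,L_j)(L_i+L_j)$ is correctly traded for $L_i L_j$ and no parasitic $L_1+L_2$ factor survives in the final estimate.
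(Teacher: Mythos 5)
Your proposal is correct and follows essentially the same route as the paper: Cauchy--Schwarz reduces everything to $\sup|E|^{1/2}$, Fubini in $\tau_1$ and $\xi_1'$ extracts the factors $L_1^{1/2}L_2^{1/2}N_{\min}'^{1/2}$, and the transversality bound $|\nabla_{\xi_1}(\pm|\xi_1|+|\xi-\xi_1|^2)|\gtrsim N_2$ (resp.\ $\gtrsim N_4$ in (ii)) yields the decisive $N_2^{-1/2}$ gain on the $\xi_1$-level set. The only cosmetic difference is that you phrase the level-set volume bound via the coarea formula, whereas the paper applies the mean value theorem in the single coordinate direction where $|(\xi-\xi_1)_i|\gtrsim N_2$; the $\min(L_i,L_j)(L_i+L_j)\lesssim L_iL_j$ bookkeeping you flag is indeed the point the paper glosses over.
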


\begin{proof}
Let $f := \hat{u},\, g := \hat{v}$. 
By the Cauchy-Schwarz inequality, we have 
\EQQS{
 &\Bigl\| \int f(\tau_1,\, \xi_1,\, \xi_1')\, g(\tau - \tau_1,\, \xi - \xi_1,\, \xi'-\xi_1') \, d\tau_1 d\xi_1 d\xi_1' \Bigr\|_{L^2_{\tau,\, \xi,\, \xi'}} \\
 &\lec \sup_{\tau,\, \xi,\, \xi'} |E(\tau,\, \xi,\, \xi')|^{1/2}\, \|f\|_2 \|g\|_2 
}
where 
\EQQS{
 E(\tau,\, \xi,\, \xi') = \{ (\tau_1,\, \xi_1,\, \xi_1') \in \supp f \, |\, (\tau - \tau_1,\, \xi - \xi_1,\, \xi'-\xi_1') \in \supp g \} \subset \mathbb{R}^{1+d}.
}
Put $\underline{l} := \min\{ L_1,\, L_2 \},\, \ol{l} := \max\{ L_1,\, L_2 \}$. 
By the Fubini theorem, 
\EQQS{
 |E(\tau,\, \xi,\, \xi')| 
   &\le \underline{l}\, N_2' \, 
        \bigl|  \bigl\{ \xi_1 \, \bigl|\, \bigl| \tau \pm |\xi_1| + |\xi - \xi_1|^2 + \xi' \bigr| \lec \ol{l},\, \xi_1 \in C,\, |\xi_1| \sim N_1,\, \\
 &\hspace{20mm}         |\xi_1'| \sim N_1',\, |\xi - \xi_1| \sim N_2,\, |\xi'-\xi_1'| \sim N_2' \bigr\} \bigr|.
}
In the right-hand side of the above inequality, the subset of the $\xi_1$ is contained in a cube of side length $m$, where $m \sim \min\{e,\, N_1\} \sim N_1$. 
For some $i \in \{1, ... , d-1\}$, we set $|(\xi-\xi_1)_i| \gec N_2$, 
where $(\xi-\xi_1)_i$ denotes the $i$-th component of $\xi - \xi_1$. 
We compute  
\EQS{
 |\partial_{\xi_{1,\, i}}(\tau \pm |\xi_1| + |\xi-\xi_1|^2 + \xi')| 
 = \Bigl| \pm \frac{\xi_{1,\, i}}{|\xi_1|} -2(\xi - \xi_1)_i \Bigr|,     \label{Mod}
}
where $\xi_{1,\, i}$ be the $i$-th component of $\xi_1$. 
Since $|\xi_{1,\, i}| \le |\xi_1|$ and $|(\xi - \xi_1)_i| \gec N_2$, 
\EQQS{
 (R.H.S.\ of\ \eqref{Mod}) \gec N_2. 
} 
Therefore,  
\EQS{
  |\partial_{\xi_{1,\, i}}(\tau \pm |\xi_1| + |\xi-\xi_1|^2 + \xi')| \gec N_2.   \label{deriv}
}
Hence by \eqref{deriv} and the mean value theorem, we have 
\EQQS{
 &\bigl| \bigl\{ \xi_1 \, \bigl|\, \left| \tau \pm |\xi_1| + |\xi - \xi_1|^2 + \xi' \right| \lec \ol{l},\, \xi_1 \in C,\, |\xi_1| \sim N_1, |\xi'| \sim N_1',\, \\
 &\hspace{20mm} |\xi - \xi_1| \sim N_2,\, |\xi' - \xi_1'| \sim N_2' \bigr\} \bigr| \\
 &\lec N_2^{-1}\, m^{d-2}\, \ol{l}. 
}
From $m \sim N_1$, we have 
\EQQS{
 |E(\tau ,\, \xi ,\, \xi')|^{1/2} 
 \lec \underline{l}^{1/2} N_2'^{1/2} N_2^{-1/2} m^{(d-2)/2}\, \ol{l}^{1/2} 
 \sim N_2^{-1/2} N_1^{(d-2)/2} N_2'^{1/2} L_1^{1/2} L_2^{1/2}. 
}
Thus, we obtain $\rm(\hspace{.18em}i-a\hspace{.18em})$. 
$\rm(\hspace{.18em}i-b\hspace{.18em})$ is proved by the same manner as for $\rm(\hspace{.18em}i-a\hspace{.18em})$, hence we omit the proof.   
$\rm(\hspace{.08em}ii\hspace{.08em})$ follows from the similar argument as the estimate for the case $\rm(\hspace{.18em}i-a\hspace{.18em})$. 
Indeed in this case, we estimate $|E(\tau,\, \xi,\, \xi')|$ as follows. 
\EQQS{
 |E(\tau,\, \xi,\, \xi')| 
 &\le \underline{l} \, N_3'\, \bigl| \bigl\{ \xi_3 \, \bigl|\, \bigl| \tau + |\xi-\xi_3|^2 + |\xi_3|^2 + \xi' \bigr| \lec \ol{l},\, |\xi_3| \sim N_3,\, \\ 
 &\hspace{20mm} |\xi_3'| \sim N_3',\, |\xi - \xi_3| \sim N_4,\, |\xi'-\xi_3'| \sim N_4' \bigr\} \bigr|
}
where $\underline{l} := \min\{ L_3,\, L_4\},\, \ol{l} := \max\{ L_3,\, L_4\}$.   
For some $i \in \{ 1, ... , d-1\}$, we set $|(\xi - \xi_3)_i| \gec N_4$ where $(\xi - \xi_3)_i$ denotes the $i$-th component of $\xi - \xi_3$. 
Then by $N_4 \gg N_3$,  we have 
\EQQS{    
 |\partial_{\xi_{3,\, i}}(\tau + |\xi - \xi_3|^2 + |\xi_3|^2 + \xi')| 
 = | -2(\xi - \xi_3)_i + 2\xi_{3,\, i}| 
 \gec N_4, 
}
where $\xi_{3,\, i}$ be the $i$-th component of $\xi_3$. 
Thus, we have 
\EQQS{
 &\bigl| \bigl\{ \xi_3 \, \bigl|\, \bigl| \tau + |\xi-\xi_3|^2 + |\xi_3|^2 + \xi' \bigr| \lec \ol{l},\, |\xi_3| \sim N_3,\, \\ 
 &\hspace{20mm} |\xi_3'| \sim N_3',\, |\xi - \xi_3| \sim N_4,\, |\xi'-\xi_3'| \sim N_4' \bigr\} \bigr| 
 \lec N_4^{-1} N_3^{d-2} \ol{l}. 
}
Therefore, we obtain the desired result. 
\end{proof}

\begin{rem} \label{BilinearStrichartz}
 If we assume $N_1 \gg N_2$ instead of $N_1 \lec N_2$ in Proposition \ref{x01/2} $\rm(\hspace{.18em}i-a\hspace{.18em})$, it holds that 
\EQQS{ 
 \|uv\|_2 \lec N_2^{(d-3)/2} N_2'^{1/2} L_1^{1/2} L_2^{1/2}\|u\|_2 \|v\|_2.           
} 
The proof of the above inequality is obtained by the same way, hence we omit it.  
\end{rem}

\section*{Acknowledgement}
The author would like to appreciate the anonymous referee who pointed out the problem of my earlier  paper, which is used $X^{s,\, b}$ space.   
The author is supported by JSPS KAKENHI Grant Number 820200500051.

\end{document}